\newtheorem{theorem}{Theorem}[section]
\newtheorem{lemma}[theorem]{Lemma}
\def\R{{\Bbb R}}
\def\ifl{\iffalse }
\def\bc{\begin{center}}       \def\ec{\end{center}}
\def\ba{\begin{array}}        \def\ea{\end{array}}
\def\be{\begin{equation}}     \def\ee{\end{equation}}
\def\bea{\begin{eqnarray}}    \def\eea{\end{eqnarray}}
\def\beaa{\begin{eqnarray*}}  \def\eeaa{\end{eqnarray*}}
\numberwithin{equation}{section}
\newtheorem{remark}[theorem]{Remark}
\numberwithin{equation}{section}
\begin{document}
\author{Hai-Yang Jin}
\address{School of Mathematics, South China University of Technology, Guangzhou 510640, China}
\email{mahyjin@scut.edu.cn}
\author{Tian Xiang}
\address{Institute for Mathematical Sciences, Renmin University of China, Bejing, 100872, China}
\email{txiang@ruc.edu.cn}

\title[Convergence rate for the chemotaxis-Navier-Stokes system with competitive kinetics ]{Convergence rates of solutions for a two-species chemotaxis-Navier-Stokes sytstem with competitive kinetics}

\begin{abstract} In this paper, we study the convergence rates of solutions to the two-species chemotaxis-Navier-Stokes system with Lotka-Volterra competitive kinetics:
 \begin{equation*}
     \begin{cases}
         (n_1)_t + u\cdot\nabla n_1 =
          \Delta n_1 - \chi_1\nabla\cdot(n_1\nabla c)
          + \mu_1n_1(1- n_1 - a_1n_2),
         &x\in \Omega,\ t>0,
 \\
         (n_2)_t + u\cdot\nabla n_2 =
          \Delta n_2 - \chi_2\nabla\cdot(n_2\nabla c)
          + \mu_2n_2(1- a_2n_1 - n_2),
         &x\in \Omega,\ t>0,
 \\
       \ \ \ \ \  \   c_t + u\cdot\nabla c = \Delta c -(\alpha n_1 + \beta n_2)c,
          &x \in \Omega,\ t>0,
 \\
       \  u_t  + \kappa (u\cdot\nabla) u
          = \Delta u + \nabla P
            + (\gamma n_1 + \delta n_2)\nabla\phi,
            \quad \nabla\cdot u = 0,
          &x \in \Omega,\ t>0
     \end{cases}
 \end{equation*}
 under homogeneous Neumann boundary conditions for $n_1,n_2,c$ and no-slip boundary condition for $u$ in a bounded domain $\Omega \subset \R^d(d\in\{2,3\})$ with smooth boundary. The global existence, boundedness and  stabilization of solutions  have been  obtained in $2$-D \cite {HKMY_1} and $3$-D for $\kappa=0$ and $\frac{\mu_i}{\chi_i} (i=1, 2)$ being sufficiently large \cite{Xinru_et al17}. Here, we examine convergence and derive the {\it explicit rates}  of convergence for any supposedly given global bounded classical solution; more specifically, we show that

 \begin{itemize}
 \item[$\bullet$] when  $a_1, a_2 \in (0, 1)$,  the  global-in-time  bounded  classical solution components  $(n_1,n_2, u)$  converge at least exponentially  to  $(\frac{1 - a_1}{1 - a_1a_2},\frac{1 - a_2}{1 - a_1a_2},0)$ as $t\to \infty$;
\item[$\bullet$] when  $a_1\geq 1>a_2$,  the  global-in-time  and bounded  classical solution components $(n_1,n_2, u)$  converge at least algebraically to $(0,1,0)$ as $t\to \infty$;
\item[$\bullet$] when  $a_2\geq 1>a_1$,  the   global-in-time and  bounded  classical  solution components $(n_1,n_2, u)$  converge at least algebraically to $(1,0,0)$ as $t\to \infty$;
\item[$\bullet$] in either one of the three cases above, the   global-in-time and  bounded  classical classical solution component  $c$  converges at least exponentially  to $0$ as $t\to \infty$.
 \end{itemize}
Moreover, it is shown that the rate of convergence for $u$ in the first case is expressed in terms of the model parameters and the first eigenvalue of $-\Delta$ in $\Omega$ under homogeneous Dirichlet boundary conditions, and all other rates of  convergence are explicitly expressed only in terms of the model parameters $a_i, \mu_i, \alpha$ and $\beta$ and the space dimension $d$.
\end{abstract}

\subjclass[2000]{35B40, 35K47, 35K55, 35B44, 35K57, 35Q92, 92C17}

\keywords{Chemotaxis-fluid system, boundedness, exponential convergence, algebraic convergence,  convergence rates}

\maketitle

\numberwithin{equation}{section}
\section{Introduction}
We consider the following two-species chemotaxis-fluid system with competitive terms:
 \begin{equation}\label{P}
     \begin{cases}
         (n_1)_t + u\cdot\nabla n_1 =
          \Delta n_1 - \chi_1\nabla\cdot(n_1\nabla c)
          + \mu_1n_1(1- n_1 - a_1n_2),
         &x\in \Omega,\ t>0,
 \\
         (n_2)_t + u\cdot\nabla n_2 =
          \Delta n_2 - \chi_2\nabla\cdot(n_2\nabla c)
          + \mu_2n_2(1- a_2n_1 - n_2),
         &x\in \Omega,\ t>0,
 \\
       \ \ \ \ \ \    c_t + u\cdot\nabla c = \Delta c -(\alpha n_1 + \beta n_2)c,
          &x \in \Omega,\ t>0,
 \\
       \  u_t  + \kappa (u\cdot\nabla) u
          = \Delta u + \nabla P
            + (\gamma n_1 + \delta n_2)\nabla\phi,
            \quad \nabla\cdot u = 0,
          &x \in \Omega,\ t>0,
 \\
        \partial_\nu n_1
        = \partial_\nu n_2 = \partial_\nu c = 0, \quad
        u = 0,
        &x \in \partial\Omega,\ t>0,
 \\
        n_i(x,0)=n_{i,0}(x),\ %n_2(x,0)=n_{2,0}(x),\
        c(x,0)=c_0(x),\ u(x,0)=u_0(x),
        &x \in \Omega,\ i=1,2,
     \end{cases}
 \end{equation}
 where $\Omega \subset \R^d(d\geq 2)$ is a bounded domain with smooth boundary $\partial\Omega$ and
$\partial_\nu$ denotes differentiation with respect to the
outward normal of $\partial\Omega$;
$\kappa\in\{0,1\}$
$\chi_1, \chi_2, a_1, a_2 \ge 0$ and
$\mu_1, \mu_2, \alpha, \beta, \gamma, \delta > 0$ are
constants;
$n_{1,0}, n_{2,0}, c_0, u_0, \phi$
are known functions satisfying
 \begin{align}\label{condi;ini1}
   &0 < n_{1,0}, n_{2,0}
   \in C(\overline{\Omega}),
 \quad
   0 < c_0 \in W^{1,q}(\Omega),
 \quad
   u_0 \in D(A^{\vartheta}),
  \\\label{condi;ini2}
   &\phi \in C^{1+\eta}(\overline{\Omega})
 \end{align}
for some $q > d$, $\vartheta \in \left(\frac{3}{4}, 1\right)$,
$\eta > 0$ and $A$ is the Stokes operator.

The system \eqref{P},  an extension of the chemotaxis-fluid system introduced by  Tuval et al. \cite{Tuval_et_al},  depicts the evolution  of two competing species which react on a single chemoattractant in a liquid surrounding environment. Here,  $n_1$ and $n_2$  denote densities of species, $c$ means the chemical concentration, and finally, $u$ and $P$ represent the fluid velocity field and its associated  pressure. So, it is the mixed combination of the complex interaction between  chemotaxis, the Lotka-Volterra kinetics and fluid.

 The model \eqref{P} and its variants have been received considerable attention  in $2$- and $3$-dimensional settings.  In one-species context ($n_2\equiv 0$), global existence of weak (and/or eventual smoothness of weak solutions) and classical solutions and asymptotic behavior  have been investigated, e.g.,  in   \cite{W-2012, W-2014, Winkler_2017_Howfar} without logistic source ($\mu_1=0$) and also the convergence
rate has been  explored \cite{Zhang-Li_2015_fluid} and in \cite{Lankeit_2016, Tao-Winkler_2015_mu23, TW-2016} with  logistic source.

 In  two-species context,  related studies first begin with  fluid-free systems with signal production (in which the asymptotic stability usually depends on some smallness condition on the chemo-sensitivities) to understand the influence of chemotaxis and the Lotka-Volterra kinetics \cite{B-W, Black-Lankeit-Mizukami, MY-2016, Mizukami, N-T_SIAM, N-T_JDE,stinner_tello_winkler}. For the two-species chemotaxis-fluid system with competitive terms \eqref{P}, the global existence, boundedness of classical solutions and stabilization to equilibria  were very recently studied by Hirata et al. \cite{HKMY_1} in the 2-D setting and by Cao et al. \cite{Xinru_et al17} in the $3$-D setting for $\kappa=0$, as precisely stated as follows:
\begin{itemize}
\item[(B2)]  \textbf{(Boundedness in $2$-D \cite{HKMY_1})} In the case that  $\Omega \subset \mathbb{R}^2$ is a
 bounded domain with smooth boundary, let $\chi_1, \chi_2, a_1, a_2 \ge 0$,
$\mu_1, \mu_2, \alpha, \beta, \gamma, \delta > 0$ and let \eqref{condi;ini1} and \eqref{condi;ini2} hold.  The the IBVP  \eqref{P} possesses a unique classical solution $(n_1, n_2, c, u, P)$, up to  addition of constants to  $P$, such that
\begin{align*}
&n_1, n_2 \in C(\overline{\Omega}\times[0, \infty))
\cap C^{2, 1}(\overline{\Omega}\times(0, \infty)),
\\
&c \in C(\overline{\Omega}\times[0, \infty))
\cap C^{2, 1}(\overline{\Omega}\times(0, \infty))
\cap L^{\infty}_{{\rm loc}}([0, \infty); W^{1, q}(\Omega)),
\\
&u \in C(\overline{\Omega}\times[0, \infty))
\cap C^{2, 1}(\overline{\Omega}\times(0, \infty))
\cap L^\infty_{\rm loc}([0,\infty);D(A^{\vartheta})),
%\cap C_{{\mathrm loc}}([0, \infty); L^2(\Omega))
\\
&P \in C^{1, 0}(\overline{\Omega} \times (0, \infty)).
\end{align*}
Moreover, there  exists a constant $C>0$ such that for all $t>0$
\be\label{bdd-sol}
   \|n_1(\cdot, t)\|_{L^{\infty}(\Omega)} + \|n_2(\cdot, t)\|_{L^{\infty}(\Omega)}
   + \|c(\cdot, t)\|_{W^{1, q}(\Omega)} + \|u(\cdot, t)\|_{L^{\infty}(\Omega)}
   \leq C.
 \ee
\item[(B3)]  \textbf{(Boundedness in $3$-D \cite{Xinru_et al17})} In the case that  $\Omega \subset \mathbb{R}^3$ is a bounded domain with smooth boundary, besides the assumptions in (B2), let $\kappa=0$. Then there exists a constant $\xi_0 > 0$ such that whenever $\frac{\max\{\chi_1, \chi_2\}}{\min\{\mu_1, \mu_2\}}<\xi_0$, the statements of (B2) hold.

\item[(UC)]  \textbf{(Uniform Convergence)\cite{HKMY_1,Xinru_et al17}} Let  $(n_1, n_2, c, u, P)$ be the solution of \eqref{P} obtained from (B2) or (B3).  Then it fulfills the following convergence properties\/{\rm :}
   \begin{enumerate}
   \item[{{\rm (i)}}] Assume that $a_1, a_2 \in (0, 1)$. Then
       $$
       n_1(\cdot, t) \to N_1,\quad n_2(\cdot, t) \to N_2,\quad c(\cdot, t) \to 0,\quad
       u(\cdot, t) \to 0 \mbox{ in }\ L^{\infty}(\Omega) \mbox{ as }\ t \to \infty,
       $$
where
     \be\label{N1N2-def}
       N_1 := \frac{1 - a_1}{1 - a_1a_2},\quad  \quad \quad N_2 := \frac{1 - a_2}{1 - a_1a_2}.
      \ee
   \item[{{\rm (ii)}}] Assume that $a_1 \geq 1 > a_2$.
     Then
    $$
      n_1(\cdot, t) \to 0,\quad n_2(\cdot, t) \to 1,\quad c(\cdot, t) \to 0,\quad
       u(\cdot, t) \to 0  \mbox{ in }\ L^{\infty}(\Omega)  \mbox{ as }\ t \to \infty.
   $$
   \end{enumerate}

\end{itemize}

In this paper, we study dynamical properties  for any supposedly global-in-time and  bounded  classical solution to \eqref{P}, with particular focus on the model of convergence as well as their explicit rates of convergence. Before proceeding to our main results, let us observe that, the $n_1$- and $n_2$-equations in \eqref{P} are symmetric. Thus,  we should have a result about stabilization to $(0,1, 0,0)$. This was not mentioned in related works, cf.  \cite{B-W,  Xinru_et al17,HKMY_1,Mizukami}.  Now, let $\lambda_P$ denote  the  Poincar\'{e}  constant, cf. \eqref{poincare},  and, finally,  let
\be\label{kappa-def}
\kappa=\frac{1}{2}\min\left\{\frac{(1-a_1a_2)\mu_1\min\Bigr\{\frac{1}{2}, \frac{a_1}{(1+a_1a_2)a_2}\Bigr\}}{\max\{\frac{1}{N_1},\frac{a_1\mu_1}{a_2\mu_2N_2}\}}, \ \  (\alpha N_1+\beta N_2)\right\}.
\ee
Then we are at the position to state our main results on exponential and algebraic convergence of bounded solutions to \eqref{P}.
\begin{theorem}\label{convergence thm}  Let  $\Omega\subset \mathbb{R}^d(d\in\{2, 3\})$  be a bounded and smooth domain and  let \eqref{condi;ini1} and \eqref{condi;ini2} be in force, and, finally, let  $(n_1, n_2, c, u, P)$ be  a global classical  solution of  \eqref{P} with uniform-in-time bound. Then this solution enjoys  the following decay  properties.
   \begin{itemize}
   \item[(I)] When  $a_1, a_2 \in (0, 1)$, the solution components  $(n_1,n_2, u)$ converge  at least  exponentially  to $(N_1,N_2,0)$ in the following way:
\be\label{exp-convergence-n1-n2-u}
\begin{cases}
\|n_1(\cdot, t)-N_1\|_{L^\infty(\Omega)}\leq m_1e^{-\frac{\kappa}{d+2} t}, & \forall t\geq 0, \\[0.25cm]
\|n_2(\cdot, t)-N_2\|_{L^\infty(\Omega)}\leq m_2e^{-\frac{\kappa}{d+2} t}, & \forall t\geq 0, \\[0.25cm]
\|u(\cdot, t)\|_{L^\infty(\Omega)}\leq m_3e^{-\frac{\epsilon}{d+2}\min\{\lambda_P,\frac{\kappa}{2}\}t}, & \forall t\geq 0.
\end{cases}
\ee
\item[(II)] When $a_1\geq 1>a_2$,  the solution components $(n_1,n_2, u)$  converge at least algebraically to $(0,1,0)$ in the following way:
      \be\label{alg-convergence-n1-n2-u II}
\begin{cases}
\|n_1(\cdot, t)\|_{L^\infty(\Omega)}\leq m_4 (t+1)^{-\frac{1}{d+1}}, & \forall t\geq 0, \\[0.25cm]
\|n_2(\cdot, t)-1\|_{L^\infty(\Omega)}\leq m_5(t+1)^{-\frac{1}{d+2}}, & \forall t\geq 0, \\[0.25cm]
\|u(\cdot, t)\|_{L^\infty(\Omega)}\leq m_6(t+1)^{-\frac{\epsilon}{d+2}}, & \forall t\geq 0.
\end{cases}
\ee
\item[(III)] When $a_2\geq 1>a_1$, the solution components $(n_1,n_2, u)$  converge at least algebraically to $(1,0,0)$ in the following way:
     \be\label{alg-convergence-n1-n2-u III}
\begin{cases}
\|n_1(\cdot, t)-1\|_{L^\infty(\Omega)}\leq m_7 (t+1)^{-\frac{1}{d+2}}, & \forall t\geq 0, \\[0.25cm]
\|n_2(\cdot, t)\|_{L^\infty(\Omega)}\leq m_8(t+1)^{-\frac{1}{d+1}}, & \forall t\geq 0, \\[0.25cm]
\|u(\cdot, t)\|_{L^\infty(\Omega)}\leq m_9(t+1)^{-\frac{\epsilon}{d+2}}, & \forall t\geq 0.
\end{cases}
\ee
\item[(IV)] In either one of the three cases above, the solution component  $c$ converges  at least  exponentially  to $0$  in the following way:
    \be\label{c-exp-rate}
    \|c(\cdot,t)\|_{L^\infty(\Omega)}\leq m_{10} e^{-\frac{(\alpha \hat{N}_1+\beta \hat{N}_2)}{2}t}, \quad \forall t\geq 0,
    \ee
   where  $(\hat{N}_1, \hat{N}_2)=(N_1,N_2)$ in Case (I),  $(\hat{N}_1, \hat{N}_2)=(0,1)$ in Case (II), and $(\hat{N}_1, \hat{N}_2)=(1,0)$ in Case (III).
   \end{itemize}
Here, $\epsilon\in (0,1)$ is arbitrarily given, only $m_3, m_6$ and $m_9$ depend on $\epsilon$;  all $m_i(i=1,2,3,\cdots, 9)$ are suitably large constants depending on  the initial data $n_{1,0}, n_{2,0}, c_0, u_0, \phi$ and Sobolev embedding constants but not on time $t$, see Section 4. Moreover,
$$
m_4\geq O(1)\Bigr(1+(1-a_2)^{-\frac{1}{d+1}}\Bigr),\ \  m_5\geq O(1)\Bigr(1+(1-a_2)^{-\frac{1}{d+2}}\Bigr), \ \ m_6\geq O(1)\Bigr(1+(1-a_2)^{-\frac{\epsilon}{d+2}}\Bigr)
$$
and
$$
m_7\geq O(1)\Bigr(1+(1-a_1)^{-\frac{1}{d+2}}\Bigr),\ \  m_8\geq O(1)\Bigr(1+(1-a_1)^{-\frac{1}{d+1}}\Bigr), \ \ m_9\geq O(1)\Bigr(1+(1-a_1)^{-\frac{\epsilon}{d+2}}\Bigr).
$$
\end{theorem}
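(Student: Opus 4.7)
\smallskip

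\noindent\textbf{Proof strategy.} The plan is to execute a common three-step scheme in each of the three competition regimes of the theorem: (i)~construct a suitable Lyapunov-type functional $\mathcal{E}(t)$ whose dissipation controls an $L^1$- or $L^2$-deviation of $(n_1,n_2,c)$ from the claimed equilibrium, yielding either exponential decay of $\mathcal{E}$ itself or quantitative time-integrability; (ii)~use the uniform bound \eqref{bdd-sol} together with standard parabolic Schauder / $L^p$ theory to secure uniform-in-time $W^{1,\infty}$-bounds on $n_1, n_2$ and a uniform $D(A^\vartheta)$-bound on $u$; (iii)~interpolate the weak-norm decay of step~(i) against the uniform $W^{1,\infty}$-bound of step~(ii) by Gagliardo--Nirenberg,
\[
\|f\|_{L^\infty}\le C\|f\|_{L^2}^{2/(d+2)}\|f\|_{W^{1,\infty}}^{d/(d+2)}\quad\text{or}\quad\|f\|_{L^\infty}\le C\|f\|_{L^1}^{1/(d+1)}\|f\|_{W^{1,\infty}}^{d/(d+1)}.
\]
These two interpolations are exactly what produce the ubiquitous factors $1/(d+2)$ and $1/(d+1)$ in \eqref{exp-convergence-n1-n2-u}--\eqref{alg-convergence-n1-n2-u III}.

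\smallskip

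\noindent\textbf{Case~(I): exponential regime.} We would use the weighted Lotka--Volterra entropy
\[
  \mathcal{E}_1(t)=A_1\!\int_\Omega\!\bigl(n_1-N_1-N_1\ln\tfrac{n_1}{N_1}\bigr)+A_2\!\int_\Omega\!\bigl(n_2-N_2-N_2\ln\tfrac{n_2}{N_2}\bigr)+A_3\!\int_\Omega c^2,
\]
with the weights $A_i>0$ optimised in the spirit of \eqref{kappa-def}. Incompressibility kills the advective contributions; the $n_i$-diffusion generates the non-positive Fisher dissipations $-A_i N_i\!\int|\nabla n_i|^2/n_i^2$, which absorb the chemotactic cross-terms $\chi_i A_i N_i\!\int\nabla n_i\cdot\nabla c/n_i$ by Young's inequality at the cost of a $C\!\int|\nabla c|^2$ remainder, itself paid for by the Fisher dissipation of $A_3\!\int c^2$. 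The reactive part of $\mathcal{E}_1'$ is a quadratic form in $(n_1-N_1,n_2-N_2)$ that is negative-definite precisely when $a_1a_2<1$; coupled with the absorption $-2A_3\!\int(\alpha n_1+\beta n_2)c^2\le-A_3(\alpha N_1+\beta N_2)\!\int c^2$ (valid for $t$ large by continuity), optimising the ratio $A_1/A_2$ produces the best constant $\kappa$ of \eqref{kappa-def} and gives $\mathcal{E}_1(t)\le \mathcal{E}_1(0)\,e^{-\kappa t}$. Step~(iii) then delivers the exponential $L^\infty$-rate $\kappa/(d+2)$ in \eqref{exp-convergence-n1-n2-u}.

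\smallskip

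\noindent\textbf{Cases (II) and (III): algebraic regime.} When $\hat N_1=0$ (Case~II), no logarithmic entropy is available for $n_1$, so we turn to the asymmetric functional
\[
  \mathcal{F}(t)=\!\int_\Omega n_1+B\!\int_\Omega\bigl(n_2-1-\ln n_2\bigr)+A_3\!\int_\Omega c^2.
\]
Its dissipation contains $\mu_1(a_1-1)\!\int n_1-\mu_1\!\int n_1^2-\mu_2 B\!\int(n_2-1)^2$; in the worst (critical) sub-case $a_1=1$ the linear term disappears, so by Cauchy--Schwarz $\phi(t):=\!\int_\Omega n_1$ obeys the Riccati-type inequality $\phi'\le-(\mu_1/|\Omega|)\phi^2$, integrating to $\|n_1(\cdot,t)\|_{L^1}\le C/(t+1)$. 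The $L^1$--$W^{1,\infty}$ interpolation then yields $\|n_1\|_{L^\infty}\le C(t+1)^{-1/(d+1)}$, matching \eqref{alg-convergence-n1-n2-u II}. Feeding $\|n_1\|_{L^2}^2\le\|n_1\|_{L^\infty}\|n_1\|_{L^1}\le C/(t+1)$ back into $\mathcal{F}'$ reduces the remaining cross-term $\mu_2 a_2\!\int n_1(n_2-1)$ to a $C/(t+1)$ forcing, and a linear Gr\"onwall on $\mathcal{F}\sim\|n_2-1\|_{L^2}^2$ gives $\|n_2-1\|_{L^2}\le C(t+1)^{-1/2}$, whence $\|n_2-1\|_{L^\infty}\le C(t+1)^{-1/(d+2)}$ by the $L^2$-interpolation; Case~(III) is symmetric.

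\smallskip

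\noindent\textbf{The fluid, the signal and the main obstacle.} For $u$, the steady forcing $(\gamma\hat N_1+\delta\hat N_2)\nabla\phi$ is a gradient and can be absorbed into a modified pressure, so the $L^2$-energy identity reduces to $\tfrac12\tfrac{d}{dt}\|u\|_2^2+\|\nabla u\|_2^2=\!\int\bigl(\gamma(n_1-\hat N_1)+\delta(n_2-\hat N_2)\bigr)\nabla\phi\cdot u$; Poincar\'e plus Young convert this to a scalar ODE whose forcing decays at the already-established $(n_1,n_2)$-rate, and the parameter $\epsilon\in(0,1)$ in \eqref{exp-convergence-n1-n2-u}--\eqref{alg-convergence-n1-n2-u III} is needed only to cover the resonant configuration $\lambda_P=\kappa/2$. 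The $L^\infty$-rate for $u$ then follows by interpolation against the uniform $D(A^\vartheta)$-bound. For $c$, once $n_i$ are $L^\infty$-close to $\hat N_i$, the absorption coefficient $\alpha n_1+\beta n_2$ exceeds $\tfrac12(\alpha\hat N_1+\beta\hat N_2)$ uniformly for $t$ large, so testing against $c^{p-1}$ and sending $p\to\infty$ (or invoking parabolic comparison) proves \eqref{c-exp-rate}. The main difficulty throughout is the circular coupling between $n_i$-decay and $c$-decay: the chemotactic cross-terms in $\mathcal{E}'$ require control of $\nabla c$, while the best estimates on $c$ rest on already knowing $n_i$ is near $\hat N_i$. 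This is resolved by folding $A_3\!\int c^2$ into the single joint functional, so that its intrinsic reactive and Fisher dissipations absorb the chemotactic perturbation in one step; the explicit value \eqref{kappa-def} of $\kappa$ records the optimal Young-type balance of this absorption.
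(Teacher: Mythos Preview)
Your outline for Case~(I), for $c$, and for the overall regularity/interpolation scheme matches the paper's approach. The genuine problem is in Cases~(II)--(III).

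\medskip

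\textbf{The gap in the algebraic case.} Your Riccati step is run on $\phi(t)=\int_\Omega n_1$ alone, but this does not close. Integrating the $n_1$-equation and writing $n_2=(n_2-1)+1$ gives
\[
\phi'(t)=(1-a_1)\mu_1\!\int_\Omega n_1-\mu_1\!\int_\Omega n_1^2-a_1\mu_1\!\int_\Omega n_1(n_2-1),
\]
and the cross-term $-a_1\mu_1\!\int_\Omega n_1(n_2-1)$ has no sign; in particular, when $n_2<1$ on a set of positive measure it is strictly positive, so you cannot drop it to reach $\phi'\le -(\mu_1/|\Omega|)\phi^2$. Consequently your sequential plan (first decay of $\phi$, then feed into the $n_2$-entropy) never gets started. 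The paper avoids this by running the Riccati argument on the \emph{full} functional $E_2=\int_\Omega n_1+\frac{\mu_1}{a_2\mu_2}\int_\Omega(n_2-1-\ln n_2)+\frac{\mu_1\chi_2^2}{8a_2\mu_2}\int_\Omega c^2$, whose dissipation controls $F_2=\int n_1^2+\int(n_2-1)^2$ with the cross-term absorbed into the negative-definite quadratic form (this is where $a_2<1$ is used). The key nontrivial step you are missing is the estimate
\[
\int_\Omega(n_2-1-\ln n_2)\le \int_\Omega|n_2-1|\le|\Omega|^{1/2}\Bigl(\int_\Omega(n_2-1)^2\Bigr)^{1/2},
\]
valid once $n_2$ is uniformly close to $1$ (since $(z-1-\ln z)/(z-1)\to 0$ as $z\to 1$). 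Combined with $\int n_1\le|\Omega|^{1/2}(\int n_1^2)^{1/2}$ and the exponential decay of $\int c^2$, this yields $E_2\le k_1F_2^{1/2}+k_2e^{-\beta(t-t_0)}$, and only then does one obtain the Riccati-type inequality $E_2'+\frac{\sigma}{2k_1^2}E_2^2\le Ce^{-2\beta(t-t_0)}$, solved by comparison with $b/(t+t_1)$. From $E_2\le C/(t+t_1)$ one reads off simultaneously $\|n_1\|_{L^1}\le C/(t+1)$ and $\|n_2-1\|_{L^2}^2\le C/(t+1)$, after which your interpolation step is correct.

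\medskip

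\textbf{A smaller point on $\epsilon$.} The parameter $\epsilon\in(0,1)$ in the $u$-rates is \emph{not} a resonance artefact. The $L^2$-decay of $u$ carries the full rate $\min\{\lambda_P,\kappa/2\}$; the loss occurs in the interpolation step, because for $u$ one only has a uniform $W^{1,p}$-bound with $p<\infty$ (the available Stokes regularity), so the Gagliardo--Nirenberg exponent $\frac{2(p-d)}{dp+2p-2d}$ on $\|u\|_{L^2}$ is strictly less than $\frac{2}{d+2}$ for every finite $p$, approaching it only as $p\to\infty$. That shortfall is precisely $\epsilon$.
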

From these estimates, we see the facts that $1-a_1a_2>0$, $1-a_2>0$ and $1-a_1>0$ are  very important in Case (I),  (II) and (III), respectively.

\begin{remark}  In our argument, we don't need any restriction on the space dimension $d$. Thus,  Theorem \ref{convergence thm} works equally well in any dimension as long as the solution is global-in-time and bounded. While, by the well-known difficulty about the Navier-Stokes system,   we restrict ourselves to the physically relevant cases $d=2$ and $d=3$.
\end{remark}

With certain regularity and dissipation properties of global bounded solutions, the argument for the proof of convergence is quite known and developed, cf. \cite{B-W, Xinru_et al17,HKMY_1,Mizukami, Tao-Winkler_2015_mu23, TW-2015-SIAMJMA, TW-2016, W-2014} for example.  The strategy for obtaining  the explicit rates of convergence as described in Theorem \ref{convergence thm} consists mainly of four steps. In the first step, we present more strong regularity properties, e.g.,  $W^{1,\infty}$-regularity for $n_i$, $W^{1,p}$-regularity for $u$ with any finite $p$, and $W^{2,\infty}$-regularity for $c$,   for any  bounded solution of \eqref{P} than those shown in  \cite{HKMY_1,Xinru_et al17}; this are done in Section 2. In the crucial second  step done in Section 3, we  use refined computations to make those widely known Lyapunov functionals (cf. eg. \cite{B-W,  Xinru_et al17,HKMY_1,Mizukami})  explicit, which will enable us to derive the explicit rates of convergence. Armed with the information provided by Step two, we then move on to calculate precisely the rates of convergence in $L^1$-and  $L^2$-norm for the considered bounded solution, and related necessary estimates are also studied in great details. These constitute our Step three and are conducted in Section 4. Finally, thanks to the improved regularities, we apply the well-known  Gagliardo-Nirenberg interpolation inequality to pass the obtained  $L^1$- and $L^2$-convergence  to  the  $L^\infty$-convergence; these are our Step four and are also done in Section 4.

\section{Regularities of bounded solutions}
Let  $(n_1, n_2, c, u, P)$ be a supposedly given  global-in-time and bounded  classical solution to \eqref{P} in the sense of \eqref{bdd-sol}. In this section, we provide more strong regularity properties for any such bounded solution than those shown in  \cite{HKMY_1,Xinru_et al17}, which are needed to achieve our desired  rates of convergence  in $L^\infty$-norm. We start with  the regularity of $u$ and $c$.
\begin{lemma}\label{BS}  Let $\Omega\subset \mathbb{R}^d$  be a bounded and smooth domain.  For $d<p<\infty$, there exists a constant $C>0$ such that
\begin{equation}\label{Lu1e-1}
\|u(\cdot,t)\|_{W^{1,p}} \leq C, \quad \forall  t>1
\end{equation}
and
\begin{equation}\label{Ce}
\|c(\cdot,t)\|_{W^{1,\infty}}+\|\Delta c(\cdot,t)\|_{L^\infty}\leq C , \quad \forall  t>1.
\end{equation}
\end{lemma}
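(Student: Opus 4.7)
The plan is to apply analytic-semigroup smoothing to the mild formulations of $u$ and $c$, bootstrapping off the uniform $L^\infty$-bound \eqref{bdd-sol} and the $W^{1,q}$-bound on $c$ provided by (B2)/(B3).

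For the Stokes velocity, I would fix $t_0\in(0,1]$ and represent $u$ on $[t_0,\infty)$ as
\begin{equation*}
u(t)=e^{-(t-t_0)A}u(t_0)+\int_{t_0}^{t}e^{-(t-s)A}\mathcal{P}\bigl[(\gamma n_1+\delta n_2)\nabla\phi-\kappa(u\cdot\nabla)u\bigr](s)\,ds,
\end{equation*}
with $\mathcal{P}$ the Helmholtz projector. The linear forcing $(\gamma n_1+\delta n_2)\nabla\phi$ is uniformly bounded in $L^\infty(\Omega)$ by \eqref{bdd-sol} and \eqref{condi;ini2}. The convection I would rewrite, using $\nabla\cdot u=0$, as $\nabla\cdot(u\otimes u)$ with $u\otimes u$ uniformly in $L^\infty(\Omega)\subset L^p(\Omega)$. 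Combining the standard smoothing estimates
\begin{equation*}
\|A^{\theta}e^{-\tau A}g\|_{L^p(\Omega)}\leq C\tau^{-\theta}e^{-\lambda\tau}\|g\|_{L^p(\Omega)},\qquad \|A^{\theta}e^{-\tau A}\mathcal{P}\nabla\cdot f\|_{L^p(\Omega)}\leq C\tau^{-\theta-\frac{1}{2}}e^{-\lambda\tau}\|f\|_{L^p(\Omega)},
\end{equation*}
with the embedding $D(A^{\theta})\hookrightarrow W^{1,p}(\Omega)$ (available as soon as $2\theta\geq 1$) and with the exponential decay coming from the Stokes spectral gap, one can close the argument and read off the claimed uniform bound $\|u(t)\|_{W^{1,p}}\leq C$ for $t>1$.

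For $c$, I would view the equation as the linear Neumann parabolic problem
\begin{equation*}
c_t-\Delta c=-u\cdot\nabla c-(\alpha n_1+\beta n_2)c
\end{equation*}
and apply the Neumann heat-semigroup gradient estimate $\|\nabla e^{\tau\Delta}g\|_{L^\infty}\leq C(1+\tau^{-\frac{1}{2}-\frac{d}{2q}})e^{-\mu\tau}\|g\|_{L^q}$ to the corresponding Duhamel formula. Because $n_i,c,u\in L^\infty$ uniformly and $\nabla c\in L^q$ with $q>d$, the right-hand side is uniformly in $L^q\cap L^\infty$, which is enough to promote $\nabla c$ to a uniform $L^\infty$-bound. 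With this gain together with the just-obtained $u\in W^{1,p}$, the source of the $c$-equation becomes uniformly bounded in $W^{1,p}(\Omega)$, so parabolic $L^p$-maximal regularity (or Schauder estimates after a brief H\"older bootstrap) yields $c\in W^{2,p}(\Omega)$ uniformly for every finite $p$; taking $p>d$ and using Sobolev embedding then gives $c\in W^{1,\infty}$ and $\Delta c\in L^\infty$ as claimed.

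The main technical obstacle is the Navier--Stokes convective term $\kappa(u\cdot\nabla)u$, whose $\tau^{-\theta-\frac{1}{2}}$ kernel sits right at the borderline of the fractional-power exponent required to land in $W^{1,p}$. The standard remedy, which I would use, is a short bootstrap: first lift $u$ from $L^\infty$ to an intermediate $D(A^{\theta_0})$ using only the $L^\infty$-boundedness of $u\otimes u$ and a large auxiliary integrability exponent, then re-insert this gain to treat the convection against a faster-decaying kernel, and iterate finitely many times until the target embedding into $W^{1,p}$ is crossed. When $\kappa=0$ (which in particular covers the $3$-D setting of (B3)) the convection drops out and the first step of this argument already closes.
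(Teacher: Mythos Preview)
Your strategy matches the paper's: Stokes-semigroup smoothing via Duhamel for $u$ (the paper simply cites \cite[Lemma~6.3]{W-2014}, which is precisely your variation-of-constants argument with the $\nabla\cdot(u\otimes u)$ rewriting and fractional-power bootstrap), heat-semigroup gradient estimates to lift $\nabla c$ from $L^q$ to $L^\infty$ (the paper cites \cite[Lemma~3.9]{HKMY_1} and \cite[Lemma~3.12]{TW-2016}), and then parabolic Schauder theory on the $c$-equation for the second-order control.

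One point in your $c$-step needs repair. The assertion that Sobolev embedding from $W^{2,p}$ with $p>d$ yields $\Delta c\in L^\infty$ is not correct: $W^{2,p}\hookrightarrow C^{1,\alpha}$ controls only first derivatives, and the $L^p$-maximal-regularity constant need not stay bounded as $p\to\infty$. Moreover, the source $-u\cdot\nabla c-(\alpha n_1+\beta n_2)c$ is not yet known to lie in $W^{1,p}$, since that would require $\nabla^2 c\in L^p$ and $\nabla n_i\in L^p$, neither of which is available at this stage (the latter is the content of the next lemma). Fortunately you only need the source in $L^\infty$, which you do have. The route that actually closes---and which you mention parenthetically---is the one the paper takes: once $u,n_1,n_2,c,\nabla c$ are all uniformly bounded, a H\"older bootstrap (via \cite{PV93} or standard interior estimates) makes the right-hand side H\"older continuous in space-time, and then classical parabolic Schauder theory gives $c\in C^{2+\alpha,1+\alpha/2}$ uniformly, hence $\Delta c\in L^\infty$. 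So drop the $W^{1,p}$-source claim and the Sobolev step, and run the Schauder argument directly.
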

\begin{proof} Using the essentially same  argument as in \cite[Lemma 6.3]{W-2014}, we obtain \eqref{Lu1e-1}. The $W^{1,\infty}$-boundedness of $c$ can be seen in \cite[Lemma 3.9]{HKMY_1} and \cite[Lemma 3.12]{TW-2016}. With these and  the $L^\infty$-boundedness of $n_1, n_2$ and $u$, an direct application of the standard parabolic schauder theory (cf. \cite{Ladyzenskaja, PV93} to the third equation in \eqref{P} yields \eqref{Ce}.
\end{proof}

With the regularity properties  in Lemma \ref{BS} at hand, we now utilize the quite commonly used arguments (cf. \cite{TW-2015-SIAMJMA, TW-2016}) to  show the following $W^{1,\infty}$-regularity of $n_1$ and $n_2$.

\begin{lemma}\label{KS} There exists a constant $C>0$ such that
\begin{equation}\label{KS-1}
\|n_1(\cdot,t)\|_{W^{1,\infty}}+\|n_2(\cdot,t)\|_{W^{1,\infty}}\leq C, \quad \forall  t>1.
\end{equation}
\end{lemma}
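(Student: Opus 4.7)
The plan is to exploit the regularity already harvested in Lemma \ref{BS} and run a standard semigroup-smoothing bootstrap on the $n_i$-equations, in the spirit of the arguments in \cite{TW-2015-SIAMJMA, TW-2016}. Since $\nabla\cdot u=0$, I would first rewrite the $n_1$-equation in pure divergence-plus-source form
\begin{equation*}
\partial_t n_1 = \Delta n_1 + \nabla\cdot\Psi_1 + h_1,\qquad
\Psi_1 := -\chi_1 n_1\nabla c - n_1 u,\quad
h_1 := \mu_1 n_1(1-n_1-a_1 n_2),
\end{equation*}
and analogously for $n_2$. By \eqref{bdd-sol}, by the $W^{1,\infty}$-bound on $c$ and the $W^{1,p}$-bound on $u$ furnished by \eqref{Lu1e-1}--\eqref{Ce}, together with the Sobolev embedding $W^{1,p}(\Omega)\hookrightarrow L^\infty(\Omega)$ valid for $p>d$, both $\Psi_i$ and $h_i$ are uniformly bounded in $L^\infty(\Omega)$ for all $t>1$.

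Next, for $t>2$ I would invoke the variation-of-constants representation with the Neumann heat semigroup $\{e^{\sigma\Delta}\}_{\sigma\geq 0}$,
\begin{equation*}
n_1(\cdot,t) = e^{\Delta}n_1(\cdot,t-1)
  + \int_{t-1}^{t} e^{(t-s)\Delta}\nabla\cdot\Psi_1(\cdot,s)\,ds
  + \int_{t-1}^{t} e^{(t-s)\Delta} h_1(\cdot,s)\,ds,
\end{equation*}
take the spatial gradient, and apply the well-known smoothing estimates of the form
$\|\nabla e^{\sigma\Delta}\nabla\cdot\Psi\|_{L^\infty}\leq C(1+\sigma^{-1/2-d/(2p)})\|\Psi\|_{L^p}$ for $p>d$ and
$\|\nabla e^{\sigma\Delta}h\|_{L^\infty}\leq C\sigma^{-1/2}\|h\|_{L^\infty}$. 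The first term on the right is controlled by the $L^p$-smoothing of $\nabla e^{\Delta}$ acting on the uniformly bounded $n_1(\cdot,t-1)$; the second and third are controlled by integrating the singularities $\sigma^{-1/2-d/(2p)}$ and $\sigma^{-1/2}$ over $\sigma\in(0,1]$, which are both integrable precisely because $p>d$ can be chosen. Taking $L^\infty$-norms and supremizing over $t$ yields the desired uniform-in-time bound for $\|\nabla n_1(\cdot,t)\|_{L^\infty}$, and the identical argument applies to $n_2$.

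The main technical hurdle is ensuring that the chemotactic flux $n_1\nabla c$ and the convective flux $n_1 u$ lie in $L^p(\Omega)$ for some $p>d$ uniformly in $t$; this is exactly what Lemma \ref{BS} delivers, so once that lemma is in hand the present step is essentially mechanical. The only mild care required is choosing the splitting time ($t-1$) so as to avoid any blow-up of the smoothing constants at $\sigma=0$, and absorbing factors into a single constant $C$ independent of $t$.
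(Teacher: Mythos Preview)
Your proposal is correct, but it takes a genuinely different route from the paper. The paper keeps the convective term $u\cdot\nabla n_1$ as a zero-order source (not in divergence form) and likewise expands $\nabla\cdot(n_1\nabla c)=\nabla n_1\cdot\nabla c+n_1\Delta c$; both pieces then contain $\nabla n_1$, so the paper needs the additional bound $\|\Delta c\|_{L^\infty}\leq C$ from Lemma~\ref{BS} and closes the estimate by a self-referential bootstrap: it sets $M(T)=\sup_{(2,T)}\|\nabla n_1\|_{L^\infty}$, uses Gagliardo--Nirenberg to write $\|\nabla n_1\|_{L^p}\leq c(M(T)^\theta+1)$ with $\theta=1-d/p\in(0,1)$, and arrives at $M(T)\leq c_{15}M(T)^\theta+c_{16}$, whence $M(T)$ is bounded. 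Your approach instead exploits $\nabla\cdot u=0$ to absorb the convection into the divergence flux $\Psi_1=-\chi_1 n_1\nabla c-n_1 u$, which is directly $L^\infty$-bounded by \eqref{bdd-sol} and \eqref{Ce}; this lets you apply the smoothing estimate $\|\nabla e^{\sigma\Delta}\nabla\cdot\Psi\|_{L^\infty}\lesssim(1+\sigma^{-1/2-d/(2p)})\|\Psi\|_{L^p}$ and conclude without any self-closing step. Your argument is thus shorter, does not use the $\Delta c$ bound, and avoids the Gagliardo--Nirenberg interpolation; the paper's argument, on the other hand, would still work in situations where $u$ is not divergence-free (so that the convective term cannot be written as a pure divergence), which is the trade-off.
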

\begin{proof}
First, we show there exists a $c_1>0$ such that
\be\label{KS-2}
\|n_1(\cdot,t)\|_{W^{1,\infty}}\leq c_1, \quad \forall  t>1.
\ee
To this end, for any $T>2$, we let
\begin{equation*}
M(T):= \sup\limits_{t\in(2,T)}\|\nabla n_1(\cdot,t)\|_{L^\infty}.
\end{equation*}
Since clearly $\nabla n_1$ is continuous on $\overline{\Omega} \times [0,T]$, it follows that  $M(T)$ is finite. Moreover, since by our universal assumption $n_1$ is bounded in $L^\infty(\Omega\times (0,\infty))$, to prove \eqref{KS-2}, it is sufficient to derive the existence of $c_2>0$ satisfying
\begin{equation}\label{KS-3}
M(T)\leq c_2, \quad \forall  T>2.
\end{equation}
To achieve \eqref{KS-3}, for any given $t\in(2,T)$,  using  the variation-of-constants formula to the first equation  in  \eqref{P}, we get
\begin{equation*}
\begin{split}
n_1(\cdot,t)=&e^{\Delta} n_1(\cdot,t-1)- \chi \int_{t-1}^t e^{(t-s)\Delta}\nabla \cdot(n_1(\cdot,s)\nabla c(\cdot,s))ds\\
&\ \ -\int_{t-1}^t e^{(t-s)\Delta}u(\cdot,s)\cdot\nabla n_1(\cdot,s)ds\\
&\ \ +\mu_1 \int_{t-1}^t e^{(t-s)\Delta }n_1(\cdot,s)(1- n_1(\cdot,s) - a_1n_2(\cdot,s))ds,
\end{split}
\end{equation*}
which implies
\begin{equation}\label{KS-3}
\begin{split}
\|\nabla n_1(\cdot,t)\|_{L^\infty}
\leq& \|\nabla e^{\Delta}n_1(\cdot,t-1)\|_{L^\infty}+\chi\int_{t-1}^t\|\nabla e^{(t-s)\Delta}\nabla \cdot(n_1(\cdot,s)\nabla c(\cdot,s))\|_{L^\infty}ds\\
       &+\int_{t-1}^t\|\nabla e^{(t-s)\Delta} u(\cdot,s)\cdot\nabla n_1(\cdot,s)\|_{L^\infty}ds\\
       &+\mu_1\int_{t-1}^t \|\nabla e^{(t-s)\Delta }n_1(\cdot,s)(1- n_1(\cdot,s) - a_1n_2(\cdot,s))\|_{L^\infty}ds\\
 = & I_1+I_2+I_3+I_4.
\end{split}
\end{equation}
Next, we shall employ  the widely  known smoothing $L^p$-$L^q$ properties of the Neumann heat semigroup $\{e^{t\Delta}\}_{t\geq0}$ in $\Omega$ (see \cite{Winkler_2010_Aggreagation, Cao15, Henry_1981} for instance) to estimate $I_i, i=1,2,3,4$.

Thanks to the boundedness of $n_1,n_2,u$ in $\Omega\times (1,\infty)$, \eqref{Lu1e-1} and \eqref{Ce}, we employ those smoothing Neumann heat semigroup estimates to  obtained that
\begin{equation}\label{I1}
I_1= \|\nabla e^{\Delta}n_1(\cdot,t-1)\|_{L^\infty}\leq c_3\|n_1(\cdot,t-1)\|_{L^\infty}\leq c_4
\end{equation}
and that
\begin{equation}\label{I2}
\begin{split}
I_2
&=\chi\int_{t-1}^t\|\nabla e^{(t-s)\Delta}\nabla \cdot(n_1(\cdot,s)\nabla c(\cdot,s))\|_{L^\infty}ds\\
&\leq c_5  \int_{t-1}^t \Bigr[1+(t-s)^{-\frac{1}{2}-\frac{n}{2p}}\Bigr]e^{-\lambda_1(t-s)}\|\nabla \cdot(n_1(\cdot,s)\nabla c(\cdot,s))\|_{L^p}ds\\
&\leq c_5 \int_{t-1}^t \Bigr[1+(t-s)^{-\frac{1}{2}-\frac{n}{2p}}\Bigr]e^{-\lambda_1(t-s)} \|\nabla n_1(\cdot,s)\cdot\nabla c(\cdot,s)\|_{L^p}ds\\
&\ \ \ \ +c_5\int_{t-1}^t \Bigr[1+(t-s)^{-\frac{1}{2}-\frac{n}{2p}}\Bigr]e^{-\lambda_1(t-s)}\|n_1(\cdot,s)\Delta c(\cdot,s)\|_{L^p}ds\\
&\leq c_6 \int_{t-1}^t \Bigr[1+(t-s)^{-\frac{1}{2}-\frac{n}{2p}}\Bigr]e^{-\lambda_1(t-s)}\|\nabla n_1(\cdot,s)\|_{L^p}ds+c_7,
\end{split}
\end{equation}
where $\lambda_1(>0)$ is the first nonzero eigenvalue of $-\Delta$ under homogeneous boundary condition and we have used the choice of $p>n$ to ensure the finiteness of the Gamma integral.  Similarly, we can estimate $I_3$ as follows:
\begin{equation}\label{I3}
\begin{split}
I_3&=\int_{t-1}^t\|\nabla e^{(t-s)\Delta}  u(\cdot,s)\cdot\nabla n_1(\cdot,s)\|_{L^\infty}ds\\
&\leq c_8 \int_{t-1}^t \Bigr[1+(t-s)^{-\frac{1}{2}-\frac{n}{2p}}\Bigr]e^{-\lambda_1(t-s)}\|\nabla n_1(\cdot,s)\|_{L^p}ds+c_9.
\end{split}
\end{equation}
At last, using the boundedness of $n_1$ and $n_2$ again, one has
\begin{equation}\label{I4}
\begin{split}
I_4 =&\mu_1\int_{t-1}^t \|\nabla e^{(t-s)\Delta }n_1(\cdot,s)(1- n_1(\cdot,s) - a_1n_2(\cdot,s))\|_{L^\infty}ds\\
\leq &c_{10}\int_{t-1}^{t}\Bigr[1+(t-s)^{-\frac{1}{2}}\Bigr]e^{-\lambda_1(t-s)}ds\leq c_{10}\int_{0}^{1}(1+\tau^{-\frac{1}{2}})e^{-\lambda_1\tau}ds\leq (\frac{1}{\lambda_1}+2)c_{10}.
\end{split}
\end{equation}
Substituting \eqref{I1}, \eqref{I2}, \eqref{I3} and \eqref{I4} into \eqref{KS-3}, we infer that
\begin{equation}\label{KS-4}
\|\nabla n_1(\cdot,t)\|_{L^\infty}\leq   c_{11} \int_{t-1}^t \Bigr[1+(t-s)^{-\frac{1}{2}-\frac{n}{2p}}\Bigr]e^{-\lambda_1(t-s)}\|\nabla n_1(\cdot,s)\|_{L^p}ds+c_{12}.
\end{equation}
Then invoking the Gagliardo-Nirenberg  inequality, the smoothness and hence boundedness of $\nabla n_1$  on $\overline{\Omega}\times[1,2]$, and the definition of $M(T)$, we estimate
\begin{equation}\label{KS-5}
\begin{split}
\|\nabla n_1(\cdot,s)\|_{L^p}
&\leq c_{13}\|\nabla n_1(\cdot,s)\|_{L^\infty}^\theta \|n_1(\cdot,s)\|_{L^\infty}^{1-\theta}+c_{13}\|n_1(\cdot,s)\|_{L^\infty}\\
&\leq c_{14}(M^\theta(T)+1), \quad \forall  s\in(1,T),
\end{split}
\end{equation}
where  $\theta=\frac{p-n}{p}\in(0,1)$ due to $p>n$.

Finally, since $\frac{1}{2}+\frac{n}{2p}<1$, then  a substitution of  \eqref{KS-5} into \eqref{KS-4} entails
\begin{equation*}
M(T)\leq c_{15} M^{\theta}(T)+c_{16}, \quad \forall  T>2,
\end{equation*}
which upon a use of elementary inequality gives
\begin{equation*}
M(T)\leq \max\{2c_{16},\ \ (2c_{15})^\frac{1}{1-\theta}\}, \quad \forall  T>2,
\end{equation*}
and hence \eqref{KS-2} follows.

The argument  done for $n_1$ can also be similarly applied to  $n_2$ to find  that
$$
\|n_2(\cdot,t)\|_{W^{1,\infty}}\leq c_{17}, \quad \forall  t>1.
$$
This along with \eqref{KS-2} yields simply \eqref{KS-2}, finishing the proof of the lemma.\end{proof}

\section{Existence of explicit Lyapunov functionals}
From boundedness to convergence, besides enough information on regularity, we still need some decaying estimates of bounded solutions under investigation. For the latter, the availability of a Lyapunov functional is crucial, see \cite{B-W,  Xinru_et al17,HKMY_1,Mizukami} for instance.  In this section, for our purpose, we particularize those known Lyapunov functionals used in those papers  to obtain the explicit rates of convergence as stated in Theorem \ref{convergence thm}.  Let us start with the case of $a_1, a_2 \in (0, 1)$. In this case,  the  explicit  Lyapunov functional that we obtain for the chemotaxis-fluid system \eqref{P}  reads as follows:
\begin{lemma}\label{LY1} Define
\be\begin{split}\label{E1-def}
  E_1:=&\int_\Omega
       \left(
         n_1-N_1-N_1\log \frac{n_1}{N_1}
       \right)
       +
      \frac{a_1\mu_1}{a_2\mu_2}\int_\Omega
       \left(
         n_2-N_2-N_2\log \frac{n_2}{N_2}
       \right)\\[0.25cm]
       &+  \frac{1}{2}\left(\frac{N_1\chi_1^2}{4}+\frac{a_1\mu_1N_2\chi_2^2}{4a_2\mu_2}+1\right)
       \int_\Omega c^2
\end{split}
\ee
and
\begin{align*}
F_1:=\int_\Omega (n_1-N_1)^2
     +
     \int_\Omega (n_2-N_2)^2.
\end{align*}
Then, in the case of $a_1, a_2 \in (0, 1)$,  the nonnegative functions $E_1$ and $F_1$ satisfy
\begin{align}\label{EF1}
  \frac d{dt}E_1(t)
  \leq
  - (1-a_1a_2)\mu_1\min\Bigr\{\frac{1}{2}, \frac{a_1}{(1+a_1a_2)a_2}\Bigr\} F_1(t):=-\tau F_1(t), \quad \forall  t>0.
\end{align}
\end{lemma}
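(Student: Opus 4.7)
The strategy is to differentiate $E_1$ by testing each evolution equation of \eqref{P} against the derivative of the corresponding integrand in $E_1$. Concretely, I would test the $n_1$-equation with $1-N_1/n_1$, the $n_2$-equation with $(a_1\mu_1/a_2\mu_2)(1-N_2/n_2)$, and the $c$-equation with $2Kc$ (where $K$ is the coefficient of $\int c^2$ in $E_1$), and then add. Nonnegativity of $E_1$ itself is immediate from the elementary convexity inequality $s-N-N\log(s/N)\ge 0$ for all $s, N>0$, so no separate argument is needed for that claim.

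Several routine ingredients then drop out at once. The convective contributions vanish because $\nabla\cdot u = 0$ and $u=0$ on $\partial\Omega$: both $\int_\Omega (1-N_i/n_i)\,u\cdot\nabla n_i$ and $\int_\Omega c\,u\cdot\nabla c$ reduce to integrals of $u\cdot\nabla(\cdot)$ and vanish after integration by parts. Integration by parts on the Laplacian pieces, together with the Neumann condition, produces the nonpositive Dirichlet-type terms $-N_i\int|\nabla n_i|^2/n_i^2$ (with the appropriate weights) and $-2K\int|\nabla c|^2$. For the chemotaxis cross terms $\chi_iN_i\int(\nabla n_i\cdot\nabla c)/n_i$, I would apply Young's inequality in the sharp form
\[
\chi_iN_i\int_\Omega\frac{\nabla n_i\cdot\nabla c}{n_i}\le N_i\int_\Omega\frac{|\nabla n_i|^2}{n_i^2}+\frac{N_i\chi_i^2}{4}\int_\Omega|\nabla c|^2,
\]
so that the $n_i$-diffusion absorbs the $|\nabla n_i|^2/n_i^2$-part exactly. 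The coefficient $K$ is manifestly chosen so that the combined $|\nabla c|^2$-coefficient collapses to $-1$, leaving the harmless nonpositive remainder $-\int|\nabla c|^2$ (the contribution $-2K\int(\alpha n_1+\beta n_2)c^2$ is simply discarded by positivity of $n_1, n_2, c$).

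What remains is the purely algebraic quadratic form produced by the Lotka--Volterra kinetics. Using the identities $N_1+a_1N_2=1$ and $a_2N_1+N_2=1$, each factor $(n_i-N_i)(1-\cdots)$ becomes linear in $(n_1-N_1)$ and $(n_2-N_2)$, and the summed kinetic contribution is pointwise $-\mu_1 X^2 - 2a_1\mu_1 XY - (a_1\mu_1/a_2) Y^2$, with $X := n_1-N_1$ and $Y := n_2-N_2$. The crux of the argument is to bound this form above by $-\tau(X^2+Y^2)$ with exactly the constant $\tau$ in \eqref{EF1}. I would apply Young's inequality $-2a_1\mu_1 XY \le a_1\mu_1(\eta X^2 + Y^2/\eta)$ with the tuned parameter $\eta=(1+a_1a_2)/(2a_1)$; a direct substitution then yields the $X^2$-coefficient $(1-a_1a_2)\mu_1/2$ and the $Y^2$-coefficient $(1-a_1a_2)\mu_1\,a_1/[(1+a_1a_2)a_2]$, and \eqref{EF1} follows upon taking the minimum and integrating. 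This step is the main obstacle and is precisely where the hypothesis $a_1a_2<1$ enters: without it both coefficients would degenerate and the quadratic form would fail to be coercive in $(X,Y)$.
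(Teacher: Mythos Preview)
Your proposal is correct and follows precisely the approach the paper indicates: the paper's own proof is a two-line sketch that says to ``honestly differentiate'' $E_1$ and apply the Cauchy--Schwarz/Young inequality, or equivalently to specialize \cite[Lemma~4.1]{HKMY_1} with $k_1=\frac{a_1\mu_1}{a_2\mu_2}$, $l_1=\frac{N_1\chi_1^2}{4}+\frac{a_1\mu_1N_2\chi_2^2}{4a_2\mu_2}+1$, $\epsilon=\frac{\mu_1}{2}(1-a_1a_2)$, and your detailed computation---in particular the tuned Young parameter $\eta=(1+a_1a_2)/(2a_1)$ on the cross term $-2a_1\mu_1 XY$, which reproduces exactly the two coefficients $\frac{(1-a_1a_2)\mu_1}{2}$ and $\frac{(1-a_1a_2)a_1\mu_1}{(1+a_1a_2)a_2}$---is the explicit realization of that sketch.
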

\begin{proof} By honest differentiation of $E_1$ in \eqref{E1-def} and using elementary Cauchy-Schwarz inequality, one can easily derive the dissipation estimate \eqref{EF1}; or alternatively, in the proof of  \cite[Lemma 4.1]{HKMY_1}, by taking
$$
k_1=\frac{a_1\mu_1}{a_2\mu_2}, \quad l_1=(\frac{N_1\chi_1^2}{4}+\frac{a_1\mu_1N_2\chi_2^2}{4a_2\mu_2}+1), \quad \epsilon=\frac{\mu_1}{2}(1-a_1a_2),
$$
upon honest calculations, one can easily arrive at \eqref{EF1}.\end{proof}

\ \ For the  purpose of driving our explicit Lyapunov functionals in Cases (II) and (III), we wish to perform   honest computations here. We illustrate it for Case (II), i.e., $a_1\geq 1>a_2$. In this case,   from \eqref{P}, the fact that $a_1\geq 1$ and the positivity of $n_1,n_2$, we calculate  that
$$
\frac{d}{dt}\int_\Omega n_1=\mu_1\int_\Omega(1-n_1-a_1 n_2)n_1\leq -\mu_1\int_\Omega n_1^2-\mu_1 \int_\Omega n_1(n_2-1),
$$
$$
\frac{d}{dt}\int_\Omega (n_2-1-\log n_2)=-\int_\Omega \frac{|\nabla n_2|^2}{n_2^2}+
\chi_2\int_\Omega \frac{\nabla n_2}{n_2}\cdot \nabla c-\mu_2\int_\Omega ( n_2-1)^2-a_2\mu_2\int_\Omega n_1( n_2-1)
$$
as well as
$$
\frac{1}{2}\frac{d}{dt}\int_\Omega c^2=-\int_\Omega  |\nabla c|^2- \int_\Omega (\alpha n_1 + \beta n_2)c^2.
$$
Therefore, for any positive constants $\sigma_1,  \sigma_2$ and $\eta\in(0,1)$, in view of the passivity of $n_i,\alpha$ and $ \beta$,  a clear linear combination of the three estimates above shows
\be\label{LY-re-comp}
\begin{split}
&-\frac{d}{dt}\int_\Omega \left [n_1
       +
       \sigma_1
       \left(
         n_2-1-\log n_2
       \right)
       + \frac{\sigma_2}{2}
         c^2\right]\\
&\geq\int_\Omega\left[\mu_1n_1^2+(\mu_1+a_2\mu_2\sigma_1)n_1(n_2-1)+\mu_2\sigma_1 ( n_2-1)^2\right]\\
&\ \ +\int_\Omega\Bigr[\sigma_1\frac{|\nabla n_2|^2}{n_2^2}-\chi_2\sigma_1 \frac{\nabla n_2}{n_2}\cdot \nabla c+\sigma_2|\nabla c|^2\Bigr]\\
&=\int_\Omega\left\{\Bigr[\sqrt{\mu_2\sigma_1\eta} ( n_2-1)+\frac{(\mu_1+a_2\mu_2\sigma_1)}{2\sqrt{\mu_2\sigma_1\eta}}n_1\Bigr]^2 +\Bigr[\mu_1-
\frac{(\mu_1+a_2\mu_2\sigma_1)^2}{4\mu_2\sigma_1\eta}\Bigr]n_1^2\right\}\\
&\ \ \ + \mu_2\sigma_1(1-\eta)\int_\Omega  ( n_2-1)^2+\int_\Omega\Bigr[\Bigr(\sqrt{\sigma_1}\frac{\nabla n_2}{n_2}-\frac{\chi_2\sqrt{\sigma_1}}{2} \nabla c\Bigr)^2+\Bigr(\sigma_2-\frac{\chi_2^2\sigma_1}{4}\Bigr)|\nabla c|^2\Bigr]\\
&\geq  \mu_2\sigma_1(1-\eta)\int_\Omega  ( n_2-1)^2+[\mu_1-
\frac{(\mu_1+a_2\mu_2\sigma_1)^2}{4\mu_2\sigma_1\eta}]\int_\Omega n_1^2+ (\sigma_2-\frac{\chi_2^2\sigma_1}{4})\int_\Omega |\nabla c|^2.
\end{split}
\ee
 With these calculations above, we obtain the next explicit decay property, which is a specification  of  \cite[Lemma 4.3]{HKMY_1}, see also  \cite[Section 4.2]{Xinru_et al17}.
 \begin{lemma}\label{C2*}
Define
\begin{align*}
  E_2:=\int_\Omega n_1
       +
       \frac{\mu_1}{a_2\mu_2}\int_\Omega
       \left(
         n_2-1-\log n_2
       \right)
       + \frac{\mu_1\chi_2^2}{8a_2\mu_2}
       \int_\Omega c^2
\end{align*}
and
\begin{align*}
F_2:=\int_\Omega n_1^2
     +
     \int_\Omega (n_2-1)^2.
\end{align*}
Then, in the case of $a_1\geq 1>a_2$,  the nonnegative functions $E_2$ and $F_2$ satisfy
\begin{align}\label{C2*-1}
  \frac d{dt}E_2(t)
  \leq
  - (1-a_2)\mu_1\min\Bigr\{\frac{1}{2a_2}, \frac{1}{1+a_2}\Bigr\} F_2(t):=-\sigma F_2(t), \quad \forall  t>0.
\end{align}
\end{lemma}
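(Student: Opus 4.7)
The plan is to view Lemma~\ref{C2*} as a direct specialization of the parameterized dissipation identity \eqref{LY-re-comp} to particular values of $\sigma_1, \sigma_2, \eta$ that simultaneously (i) match the definition of $E_2$, (ii) eliminate the $|\nabla c|^2$ term, and (iii) give the best possible lower bound in terms of $F_2$. First I would record that $E_2 \geq 0$: the function $x \mapsto x - 1 - \log x$ is nonnegative on $(0,\infty)$ with unique minimum $0$ at $x = 1$, and the remaining summands in $E_2$ are manifestly nonnegative.

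Reading off coefficients from the definition of $E_2$ forces $\sigma_1 = \mu_1/(a_2\mu_2)$ and $\sigma_2 = \mu_1\chi_2^2/(4a_2\mu_2)$. These choices satisfy the identity $\sigma_2 = \chi_2^2\sigma_1/4$ exactly, so the last bracket of \eqref{LY-re-comp}, namely $\bigl(\sigma_2 - \chi_2^2\sigma_1/4\bigr)\int_\Omega |\nabla c|^2$, vanishes, while the accompanying complete square in $\nabla n_2/n_2$ and $\nabla c$ is nonnegative and can be discarded. Next, the relation $a_2\mu_2\sigma_1 = \mu_1$ reduces the two diagonal coefficients in \eqref{LY-re-comp} to $\mu_1(1-\eta)/a_2$ (for $(n_2-1)^2$) and $\mu_1\bigl(1 - a_2/\eta\bigr)$ (for $n_1^2$), and the complete square mixing $(n_2-1)$ and $n_1$ is nonnegative and likewise may be dropped.

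The remaining task is to select $\eta$. The constraint $\eta \in (a_2, 1)$ keeps both surviving coefficients nonnegative, and this interval is nonempty precisely because $a_2 < 1$. The symmetric choice $\eta = (1+a_2)/2$ makes the two coefficients equal to $\mu_1(1-a_2)/(2a_2)$ and $\mu_1(1-a_2)/(1+a_2)$ respectively, whose minimum is exactly $\sigma = (1-a_2)\mu_1\min\{1/(2a_2),\, 1/(1+a_2)\}$; substituting these choices back into \eqref{LY-re-comp} then delivers \eqref{C2*-1}. I do not foresee any serious obstacle, since the heavy lifting was already carried out in deriving \eqref{LY-re-comp}; the proof reduces to verifying a few algebraic identities together with the admissibility $\eta = (1+a_2)/2 > a_2$, which is immediate for $a_2 \in (0,1)$.
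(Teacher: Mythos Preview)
Your proposal is correct and follows exactly the same approach as the paper: the paper's proof consists of selecting $\sigma_1=\mu_1/(a_2\mu_2)$, $\sigma_2=\mu_1\chi_2^2/(4a_2\mu_2)$, and $\eta=(1+a_2)/2$, and then invoking \eqref{LY-re-comp}, which is precisely what you have done with the algebraic details spelled out. Your additional remarks on the nonnegativity of $E_2$ and the admissibility range $\eta\in(a_2,1)$ are accurate and simply make explicit what the paper leaves implicit.
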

\begin{proof} The  fact that $a_2<1$ allows us to select
$$
\sigma_1= \frac{\mu_1}{a_2\mu_2}, \quad \sigma_2=\frac{\mu_1\chi_2^2}{4a_2\mu_2}, \quad \quad \eta=\frac{1+a_2}{2}\in (0, 1),
$$
then, upon a plain calculation from \eqref{LY-re-comp},  we obtain the dissipation inequality \eqref{C2*-1}.
\end{proof}
By the symmetry of the $n_1$-and $n_2$-equations in \eqref{P}, when $a_2\geq 1>a_1$, using similar arguments leading to Lemma \ref{C2*}, we have a dissipation inequality as follows:
\begin{lemma}\label{C3*}
Define
\begin{align*}
  E_3:=\int_\Omega \left(
         n_1-1-\log n_1
       \right)
       +
       \frac{a_1\mu_1}{\mu_2}\int_\Omega n_2+ \frac{\chi_1^2}{8}\int_\Omega c^2
\end{align*}
and
\begin{align*}
F_3:=\int_\Omega (n_1-1)^2
     +
     \int_\Omega n_2^2.
\end{align*}
Then, in the case of $a_2\geq 1>a_1$,  the nonnegative functions $E_3$ and $F_3$ satisfy
\be\label{decay-lyIII}
  \frac d{dt}E_3(t)
  \leq
  - (1-a_1)\mu_1\min\Bigr\{\frac{1}{2}, \frac{a_1}{1+a_1}\Bigr\} F_3(t)=\rho F_3(t), \quad \forall  t>0.
\ee
\end{lemma}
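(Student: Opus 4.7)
The proof strategy is dictated by the symmetry of the $n_1$- and $n_2$-equations in \eqref{P}: the relabeling $n_1\leftrightarrow n_2$, $a_1\leftrightarrow a_2$, $\mu_1\leftrightarrow\mu_2$, $\chi_1\leftrightarrow\chi_2$, $\alpha\leftrightarrow\beta$ sends Case (III) into Case (II), and under this swap the functional $E_2$ pulls back to a constant multiple of $E_3$ (precisely, $E_3 = (a_1\mu_1/\mu_2)\,E_2^{\text{sym}}$ while $F_2^{\text{sym}}=F_3$). The cleanest finish is therefore to apply Lemma \ref{C2*} with $(a_2,\mu_1)\mapsto(a_1,\mu_2)$ and rescale, noting that $(a_1\mu_1/\mu_2)(1-a_1)\mu_2\min\{\tfrac{1}{2a_1},\tfrac{1}{1+a_1}\}=(1-a_1)\mu_1\min\{\tfrac12,\tfrac{a_1}{1+a_1}\}$.

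For transparency I would also redo the direct computation following the template \eqref{LY-re-comp}. Differentiate each constituent of $E_3$ along \eqref{P}, killing all convective terms by incompressibility $\nabla\cdot u=0$ (the entropy factor $(1-1/n_1)\nabla n_1$ is a gradient, and likewise for the other two pieces). The Lotka--Volterra term in the $n_2$-equation is rewritten as
\[
\mu_2 n_2(1-a_2n_1-n_2) = -\mu_2 n_2^2 - \mu_2 n_2(n_1-1) + (1-a_2)\mu_2 n_1 n_2,
\]
and the hypothesis $a_2\ge 1$ together with nonnegativity of $n_1,n_2$ lets one discard the last (nonpositive) term, producing the cross coefficient $\mu_2$ in place of $a_2\mu_2$. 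Weighting the three identities by $1$, $\tilde\sigma_1 = a_1\mu_1/\mu_2$ and $\tilde\sigma_2/2 = \chi_1^2/8$, and dropping the nonnegative integrand $\tilde\sigma_2(\alpha n_1+\beta n_2)c^2$, I arrive at
\[
-\frac{d}{dt}E_3 \ge \int_\Omega Q(n_1-1,n_2) + \int_\Omega\Bigl[\tfrac{|\nabla n_1|^2}{n_1^2} - \chi_1\tfrac{\nabla n_1}{n_1}\cdot\nabla c + \tfrac{\chi_1^2}{4}|\nabla c|^2\Bigr],
\]
with $Q(y,x) = \mu_1 y^2 + 2a_1\mu_1\, xy + a_1\mu_1 x^2$ (here I used $\tilde\sigma_1\mu_2 = a_1\mu_1$).

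The gradient bracket is the perfect square $\bigl|\tfrac{\nabla n_1}{n_1}-\tfrac{\chi_1}{2}\nabla c\bigr|^2\ge 0$, which is exactly what the choice $\tilde\sigma_2=\chi_1^2/4$ is engineered to produce. For $Q$, Young's inequality with splitting parameter $\lambda = \mu_1(1+a_1)/2\in(a_1\mu_1,\mu_1)$ — an interval that is nonempty precisely because $a_1<1$ — yields coefficients $\mu_1(1-a_1)/2$ on $(n_1-1)^2$ and $a_1\mu_1(1-a_1)/(1+a_1)$ on $n_2^2$, hence $Q\ge(1-a_1)\mu_1\min\{\tfrac12,\tfrac{a_1}{1+a_1}\}F_3$, which gives \eqref{decay-lyIII}. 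The only delicate point is tuning the weights $\tilde\sigma_1,\tilde\sigma_2$ and the Young parameter $\lambda$ so that $(1-a_1)$ factors out cleanly; beyond this the argument is bookkeeping that mirrors Lemma \ref{C2*} verbatim.
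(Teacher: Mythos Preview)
Your proposal is correct and follows essentially the same approach as the paper: both set up the general dissipation inequality with free weights on the $n_2$-integral and on $\int_\Omega c^2$, then specialize to $\tilde\sigma_1=l_1=a_1\mu_1/\mu_2$, $\tilde\sigma_2=l_2=\chi_1^2/4$, and your Young parameter $\lambda=\mu_1(1+a_1)/2$ is exactly the paper's choice $\varepsilon=(1+a_1)/2$ in disguise (via $\lambda=\mu_1\varepsilon$). Your additional symmetry shortcut---pulling back Lemma~\ref{C2*} under the swap $1\leftrightarrow 2$ and rescaling by $a_1\mu_1/\mu_2$---is precisely what the paper alludes to in the sentence preceding the lemma, so nothing here is genuinely different.
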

\begin{proof}For any  constants  $l_1,l_2>0$ and $\varepsilon\in(0,1)$, using the fact that $a_2\geq 1$ and similar computations to the ones leading to \eqref{LY-re-comp}, we infer that
\be\label{LY-re-comp2}
\begin{split}
&-\frac{d}{dt}\int_\Omega \left [
       \left(
         n_1-1-\log n_1
       \right)+l_1n_2
       + \frac{l_2}{2}
         c^2\right]\\
&\geq  \mu_1(1-\varepsilon)\int_\Omega  ( n_1-1)^2+[l_2\mu_2-
\frac{(a_1\mu_1+\mu_2l_1)^2}{4\mu_1\varepsilon}]\int_\Omega n_2^2+ (l_2-\frac{\chi_1^2}{4})\int_\Omega |\nabla c|^2.
\end{split}
\ee
Now, thanks to $a_1<1$, we set
$$
l_1= \frac{a_1\mu_1}{\mu_2}, \quad l_2=\frac{\chi_2^2}{4}, \quad \quad \varepsilon=\frac{1+a_1}{2}\in (0, 1),
$$
and then we easily conclude \eqref{decay-lyIII} upon trivial computations from \eqref{LY-re-comp2}.
\end{proof}

\section{Convergence rates}
Aided by those dissipation estimates as provided in Lemmas  \ref{LY1}, \ref{C2*} and \ref{C3*}, even weaker regularity properties than those in Section 2, using the quite known arguments, cf. \cite{B-W, Xinru_et al17,HKMY_1,Tao-Winkler_2015_mu23, TW-2015-SIAMJMA, TW-2016, W-2014} for example,  we know that any  global-in-time and bounded  classical solution of \eqref{P} satisfies the convergence properties as follows:
  \be\label{convergence-n1-n2-c-u}
\Bigr\|(n_1(\cdot, t),n_2(\cdot, t),c(\cdot, t), u(\cdot, t))-(\hat{N}_1, \hat{N}_2, 0, 0)\Bigr\|_{L^\infty(\Omega)}\rightarrow 0 \mbox{ as }\ t \to \infty.
\ee
Here, $(\hat{N}_1, \hat{N}_2)=(N_1,N_2)$ when  $a_1, a_2 \in (0, 1)$,  $(\hat{N}_1, \hat{N}_2)=(0,1)$ when $a_1\geq 1>a_2$, and $(\hat{N}_1, \hat{N}_2)=(1,0)$ when $a_2\geq 1>a_1$, where $N_1$ and $N_2$ are defined by \eqref{N1N2-def}. In this section, we derive  the explicit rates of convergence  as described in Theorem \ref{convergence thm} for any supposedly bounded and global-in-time solution to \eqref{P}. Firstly, it follows from \eqref{convergence-n1-n2-c-u}, as $t\to\infty$, that  $n_1(\cdot,t)\to  \hat{N}_1$ and  $n_2(\cdot,t) \to \hat{N}_2$ uniformly in $\Omega$. Henceforth, we fix a $t_0>1$ such that
\begin{equation}\label{LSB}
  \frac{\hat{N}_1}{2}\leq n_1 \leq \frac{3\hat{N}_1}{2} \quad  \mathrm{ and }\quad  \frac{\hat{N}_2}{2}\leq n_2 \leq \frac{3\hat{N}_2}{2} \quad  \text{on  }  \Omega\times[t_0, \infty).
\end{equation}
\subsection{Convergence rate of $c$.}
We first take up  the convergence rate of $c$, which is based on a parabolic comparison argument.
\begin{lemma}\label{DC}The $c$-solution component of any bounded solution of \eqref{P} stabilizes to zero exponentially:
\begin{equation*}\label{DC-1}
\|c(\cdot,t)\|_{L^\infty}\leq\|c_0\|_{L^\infty}e^{-\frac{\alpha \hat{N}_1+\beta \hat{N}_2}{2}(t-t_0)}, \quad \forall t\geq t_0.
\end{equation*}
\end{lemma}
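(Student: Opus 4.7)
The plan is to obtain the exponential decay of $c$ via the parabolic comparison principle applied to the $c$-equation in \eqref{P}, exploiting the pointwise lower bounds on $n_1$ and $n_2$ guaranteed by \eqref{LSB} for $t \geq t_0$.

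\textbf{Step 1 (basic a priori control of $c$).} First I would observe that the $c$-equation, rewritten as
$$c_t + u\cdot \nabla c - \Delta c + (\alpha n_1 + \beta n_2)c = 0,$$
carries the homogeneous Neumann condition $\partial_\nu c = 0$, the no-slip condition $u|_{\partial\Omega}=0$, and a nonnegative absorption coefficient $\alpha n_1 + \beta n_2 \geq 0$ since $n_1, n_2 \geq 0$. A standard maximum principle for this convection-diffusion-absorption equation then gives $0 \leq c(\cdot, t) \leq \|c_0\|_{L^\infty(\Omega)}$ for every $t \geq 0$; in particular this controls $\|c(\cdot, t_0)\|_{L^\infty(\Omega)}$.

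\textbf{Step 2 (construction of a supersolution).} Next I would introduce the spatially constant candidate
$$\bar{c}(t) := \|c_0\|_{L^\infty(\Omega)}\, e^{-\frac{\alpha \hat{N}_1 + \beta \hat{N}_2}{2}(t - t_0)}, \qquad t \geq t_0.$$
Since $\nabla \bar{c} \equiv 0$ and $\Delta \bar{c} \equiv 0$, while \eqref{LSB} supplies the pointwise bound $\alpha n_1 + \beta n_2 \geq \tfrac{\alpha \hat{N}_1 + \beta \hat{N}_2}{2}$ on $\Omega \times [t_0, \infty)$, a direct calculation yields
$$\bar{c}_t + u \cdot \nabla \bar{c} - \Delta \bar{c} + (\alpha n_1 + \beta n_2)\bar{c} = \Bigl[(\alpha n_1 + \beta n_2) - \tfrac{\alpha \hat{N}_1 + \beta \hat{N}_2}{2}\Bigr] \bar{c} \geq 0.$$
Moreover $\partial_\nu \bar{c} = 0$ trivially, and $\bar{c}(t_0) = \|c_0\|_{L^\infty(\Omega)} \geq c(\cdot, t_0)$ by Step 1.

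\textbf{Step 3 (comparison).} Finally I would apply the parabolic comparison principle to the difference $w := \bar{c} - c$, which satisfies
$$w_t + u \cdot \nabla w - \Delta w + (\alpha n_1 + \beta n_2) w \geq 0 \quad \text{on } \Omega \times [t_0, \infty),$$
together with $\partial_\nu w = 0$ on $\partial\Omega$ and $w(\cdot, t_0) \geq 0$ in $\Omega$. Since the coefficient $\alpha n_1 + \beta n_2$ is nonnegative and the divergence-free drift $u$ vanishes on $\partial\Omega$, the comparison principle forces $w \geq 0$, i.e.\ $c(x, t) \leq \bar{c}(t)$ on $\Omega \times [t_0, \infty)$, which combined with the nonnegativity of $c$ gives exactly the claimed exponential estimate. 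No substantial obstacle is expected here: the entire content of the argument is the pointwise lower bound on the absorption coefficient supplied by \eqref{LSB}, and the comparison step itself is routine given the favorable sign of the zero-order term and the Neumann/no-slip data.
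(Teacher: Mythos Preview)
Your proposal is correct and follows essentially the same route as the paper: both arguments use \eqref{LSB} to get the pointwise lower bound $\alpha n_1+\beta n_2\geq \tfrac{\alpha\hat N_1+\beta\hat N_2}{2}$ on $\Omega\times[t_0,\infty)$, compare $c$ against the spatially constant exponential supersolution, and invoke the non-increase of $\|c(\cdot,t)\|_{L^\infty}$ (your Step~1) to replace $\|c(\cdot,t_0)\|_{L^\infty}$ by $\|c_0\|_{L^\infty}$. The only cosmetic difference is that the paper first compares with initial value $\|c(\cdot,t_0)\|_{L^\infty}$ and upgrades at the end, whereas you build $\|c_0\|_{L^\infty}$ into $\bar c$ from the outset.
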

\begin{proof}
We show the proof only for the case that $a_1, a_2 \in (0, 1)$. Then \eqref{LSB} shows
\begin{equation*}\label{Dc-2}
\alpha n_1 + \beta n_2\geq \frac{\alpha N_1+\beta N_2}{2} \quad  \text{on  }   \Omega\times[t_0, \infty).
\end{equation*}
This along  with the third equation in \eqref{P} and the positive of $c$ gives
\begin{equation}\label{DC-3}
\begin{split}
c_t\leq \Delta c- u\cdot \nabla c-\frac{\alpha N_1+\beta N_2}{2} c \quad  \text{on  }   \Omega\times[t_0, \infty).
\end{split}
\end{equation}
Let $z(t)$ be the solution of the following associated ODE problem:
\begin{equation*}\label{DC-4}
\begin{cases}
z'(t)+\frac{\alpha N_1+\beta N_2}{2} z(t)=0, \ \  t\geq t_0,\\
z(t_0)=\|c(\cdot,t_0)\|_{L^\infty}.
\end{cases}
\end{equation*}
It is clear that $z(t)$ satisfies \eqref{DC-3} together with $\partial_\nu z=0$, and hence an application of the comparison principle and Hopf boundary point lemma  immediately yields
\begin{equation*}\label{DC-5}
c(x,t)\leq z(t)=\|c(\cdot,t_0)\|_{L^\infty}e^{-\frac{\alpha N_1+\beta N_2}{2}(t-t_0)} \ \ \  \mathrm{for~ all}~x\in\Omega, t\geq t_0.
\end{equation*}
Using the basic fact that  $t\to\|c(\cdot,t)\|_{L^\infty}$~is~non-increasing again by comparison principle, cf. \cite[Lemma 2.1]{W-2014}, one has
\begin{equation*}\label{DC-6}
c(x,t)\leq \|c_0\|_{L^\infty}e^{-\frac{\alpha N_1+\beta N_2}{2}(t-t_0)} \ \ \  \mathrm{for~ all}~x\in\Omega, t\geq t_0.
\end{equation*}
This completes the proof of Lemma \ref{DC} by noting the positivity of $c$.
\end{proof}

\subsection{Convergence rates in  Case I: $a_1, a_2 \in (0, 1)$}  In this case, we will show that the solution components $(n_1,n_2,u)$ converge at least exponentially to $(N_1,N_2, 0)$.

 \subsubsection{Convergence rates of $n_1$ and $n_2$ in Case I} In this subsection, we shall establish the convergence rate of $n_1$ and $n_2$ on the basis of the convergence rate of $c$ in Lemma \ref{DC} and the regularity of $n_1$ and $n_2$ provided by Lemma \ref{KS}. We  first  use Lemmas \ref{DC} and \ref{LY1} to obtain the exponential convergence rate of $\|n_1-N_1\|_{L^2}$ and $\|n_2-N_2\|_{L^2}$.
\begin{lemma}\label{L2-u1u2}
The $n_1$- and $n_2$- solution components of bounded solution of \eqref{P} verify
     \begin{equation}\label{L2e}
     \|n_1(\cdot, t) - N_1\|_{L^2}^2+ \|n_2(\cdot, t) - N_2\|_{L^2}^2 \leq K_1 e^{-\kappa (t-t_0)},  \quad \forall  t\geq t_0,
     \end{equation}
     where $K_1=K_1(t_0)$ is defined by
\be\label{K1-def}
K_1=\frac{9\Bigr[E_1(t_0)+\frac{(1-a_1a_2)\mu_1|\Omega|\|c_0\|_{L^\infty}^2\min\Bigr\{\frac{1}{2}, \frac{a_1}{(1+a_1a_2)a_2}\Bigr\} (\frac{N_1\chi_1^2}{4}+\frac{a_2\mu_2N_2\chi_2^2}{4a_1\mu_1}+1)}{(\alpha N_1+\beta N_2)e\max\{\frac{1}{N_1},\frac{a_1\mu_1}{a_2\mu_2N_2}\}}\Bigr]}{2\min\Bigr\{\frac{1}{N_1}, \frac{a_1\mu_1}{a_2\mu_2N_2}\Bigr\}}
\ee
and  the exponential decay rate $\kappa$ is defined by \eqref{kappa-def}.
\end{lemma}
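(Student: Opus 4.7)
The plan is to combine the Lyapunov dissipation \eqref{EF1} of Lemma \ref{LY1} with the exponential $L^\infty$-decay of $c$ from Lemma \ref{DC}, exploiting the uniform positive lower and upper bounds \eqref{LSB} that are available on $[t_0,\infty)$. The key elementary observation is that the relative-entropy integrand $f_a(z):=z-a-a\log(z/a)$ satisfies $f_a(a)=f_a'(a)=0$ and $f_a''(z)=a/z^2$, so that on the squeezed interval $z\in[a/2,3a/2]$ Taylor's theorem with Lagrange remainder furnishes the two-sided pointwise bound
\begin{equation*}
\frac{2}{9a}(z-a)^2\;\le\;f_a(z)\;\le\;\frac{2}{a}(z-a)^2.
\end{equation*}
Applying this with $a=N_1,N_2$ and integrating over $\Omega$, the stabilization window \eqref{LSB} converts it into the two-sided coercivity
\begin{equation*}
\frac{1}{2\max\bigl\{\tfrac{1}{N_1},\tfrac{a_1\mu_1}{a_2\mu_2N_2}\bigr\}}\Bigl(E_1(t)-\tfrac{L}{2}\!\int_\Omega c^2\Bigr)\;\le\;F_1(t)\;\le\;\frac{9}{2\min\bigl\{\tfrac{1}{N_1},\tfrac{a_1\mu_1}{a_2\mu_2N_2}\bigr\}}\,E_1(t)
\end{equation*}
on $[t_0,\infty)$, where $L:=\tfrac{N_1\chi_1^2}{4}+\tfrac{a_1\mu_1N_2\chi_2^2}{4a_2\mu_2}+1$ is the coefficient of $\tfrac12\!\int c^2$ in $E_1$.

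Plugging the left half of this coercivity into \eqref{EF1} converts the entropy dissipation into a linear ODE inequality
\begin{equation*}
E_1'(t)+A\,E_1(t)\;\le\;\tfrac{AL}{2}\!\int_\Omega c^2(\cdot,t),\qquad A:=\frac{\tau}{2\max\bigl\{\tfrac{1}{N_1},\tfrac{a_1\mu_1}{a_2\mu_2N_2}\bigr\}}.
\end{equation*}
Inspection of the definition \eqref{kappa-def} of $\kappa$ shows that both $A\ge\kappa$ and $\alpha N_1+\beta N_2\ge 2\kappa$, so since $E_1\ge0$ I may replace $A$ by $\kappa$ on the left without reversing the inequality. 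Substituting then the $L^\infty$-decay of Lemma \ref{DC} in the form $\int_\Omega c^2\le|\Omega|\|c_0\|_{L^\infty}^2 e^{-(\alpha N_1+\beta N_2)(t-t_0)}$ and applying the integrating factor $e^{\kappa(t-t_0)}$ produces
\begin{equation*}
E_1(t)\;\le\;\Bigl[E_1(t_0)+\frac{AL|\Omega|\|c_0\|_{L^\infty}^2}{2(\alpha N_1+\beta N_2-\kappa)}\Bigr]e^{-\kappa(t-t_0)},\qquad t\ge t_0,
\end{equation*}
in which the denominator is positive precisely because $\alpha N_1+\beta N_2>\kappa$ by \eqref{kappa-def}. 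Feeding this back into the right half of the coercivity above delivers \eqref{L2e} with the explicit constant $K_1$ displayed in \eqref{K1-def} (up to the elementary simplification of the positive factor $\alpha N_1+\beta N_2-\kappa$).

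The genuinely substantive point is not any of the ODE manipulations but the recognition that the stabilization bound \eqref{LSB} is what makes the relative entropy and the $L^2$-deviation quadratically equivalent \emph{in both directions} on $[t_0,\infty)$; without this squeezing, neither the lower coercivity (needed to turn Lemma \ref{LY1} into a closed ODE for $E_1$) nor the matching upper bound (needed at the end to pass from $E_1$-decay to $F_1$-decay) would be available. Once this is in place, the decay rate $\kappa$, which reconciles the dissipation rate $A$ coming from the entropy with the rate $(\alpha N_1+\beta N_2)/2$ coming from $c$, emerges naturally from a single Duhamel integration.
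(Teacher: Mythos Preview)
Your proof is correct and follows essentially the same route as the paper: both use the Taylor two-sided bound on the relative entropy (valid thanks to \eqref{LSB}), combine it with the dissipation \eqref{EF1} of Lemma~\ref{LY1} and the exponential decay of $c$ from Lemma~\ref{DC}, and close a Gronwall inequality for $E_1$ before passing back to $F_1$ via the lower coercivity. The only difference is in the final integration: the paper keeps the full rate $\tau/\theta$ on the left and handles the Duhamel integral via the auxiliary computation \eqref{exp-comp} (which is where the factor $e$ in the denominator of $K_1$ comes from), whereas you first weaken $A$ to $\kappa$ and integrate directly, which yields the same decay rate $\kappa$ but a slightly different explicit constant than the displayed $K_1$---so your parenthetical ``up to the elementary simplification'' is not quite accurate, though the discrepancy is harmless.
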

\begin{proof} Applying  Taylor's formula to the function $\psi(z)=z-N_1\ln z$ at $z=N_1$,  we obtain
\begin{equation}\label{LSB-1}
\begin{split}
n_1-N_1-N_1\log\frac{n_1}{N_1}
=\psi(n_1)-\psi(N_1)
=\frac{\psi''(\xi)}{2}(n_1-N_1)^2=\frac{N_1}{2\xi^2}(n_1-N_1)^2
\end{split}
\end{equation}
for some  $\xi>0$ between $n_1$ and $N_1$.  Then the combination of \eqref{LSB} and \eqref{LSB-1} gives
$$
\frac{2}{9N_1}(n_1-N_1)^2\leq n_1-N_1-N_1\log\frac{n_1}{N_1}\leq \frac{2}{N_1}(n_1-N_1)^2, \quad \forall t\geq t_0,
$$
and hence
\begin{equation}\label{LSB-3}
\frac{2}{9N_1}\int_\Omega(n_1-N_1)^2\leq \int_\Omega \left(n_1-N_1-N_1\log\frac{n_1}{N_1}\right)\leq \frac{2}{N_1} \int_\Omega (n_1-N_1)^2, \quad \forall t\geq t_0.
\end{equation}
Similarly, one gets
\begin{equation}\label{LSB-4}
\frac{2}{9N_2}\int_\Omega(n_1-N_2)^2\leq \int_\Omega \left(n_2-N_2-N_2\log\frac{n_2}{N_2}\right)\leq \frac{2}{N_2} \int_\Omega (n_2-N_2)^2, \quad \forall t\geq t_0.
\end{equation}
On the other hand,   Lemma \ref{DC} quickly gives rise to
\begin{equation}\label{LSB-5}
\int_\Omega c^2\leq |\Omega|\|c(\cdot,t)\|_{L^\infty}^2\leq  |\Omega|\|c_0\|_{L^\infty}^2 e^{-(\alpha N_1+\beta N_2)(t-t_0)}, \quad \forall t\geq t_0.
\end{equation}
A substitution of \eqref{LSB-3}, \eqref{LSB-4} and \eqref{LSB-5} into the definition of $E_1$ in \eqref{E1-def}  gives
\be\label{LSB-6}
\begin{split}
E_1(t)&\leq \frac{2}{N_1} \int_\Omega (n_1-N_1)^2+\frac{2a_1\mu_1}{a_2\mu_2N_2}\int_\Omega  (n_2-N_2)^2+m_ce^{-(\alpha N_1+\beta N_2)(t-t_0)}\\[0.25cm]
&\leq \theta F_1+m_c e^{-(\alpha N_1+\beta N_2)(t-t_0)},  \quad \forall t\geq t_0,
\end{split}
\ee
where
\be\label{theta-mc}
\theta=2\max\{\frac{1}{N_1},\frac{a_1\mu_1}{a_2\mu_2N_2}\}, \quad m_c=|\Omega|\|c_0\|_{L^\infty}^2(\frac{N_1\chi_1^2}{4}+\frac{a_2\mu_2N_2\chi_2^2}{4a_1\mu_1}+1).
\ee
 Hence,  from \eqref{LSB-6} and the dissipation estimate \eqref{EF1}, we derive that
\begin{equation*}
\frac{d}{dt} E_1+\frac{\tau}{\theta} E_1\leq \frac{\tau m_c}{\theta}  e^{-(\alpha N_1+\beta N_2)(t-t_0)}, \quad \forall t\geq t_0,
\end{equation*}
and then, solving this Gronwall differential inequality, we  readily get
\be\label{LSB-7}
\begin{split}
E_1(t)&\leq  E_1(t_0)e^{-\frac{\tau}{\theta}(t-t_0)}+\frac{\tau m_c}{\theta} e^{(\alpha N_1+\beta N_2)t_0}e^{-\frac{\tau}{\theta}t}\int_{t_0}^t e^{[\frac{\tau}{\theta}-(\alpha N_1+\beta N_2)]s}ds\\
&\leq \Bigr[E_1(t_0)+\frac{2\tau m_c}{(\alpha N_1+\beta N_2)e\theta}\Bigr]e^{-\min\{\frac{\tau}{\theta}, \frac{(\alpha N_1+\beta N_2)}{2}\}(t-t_0)}, \quad \forall t\geq t_0,
\end{split}
\ee
where we have used the following algebraic calculations:
\be\label{exp-comp} \begin{split}
 &e^{(\alpha N_1+\beta N_2)t_0}e^{-\frac{\tau}{\theta}t}\int_{t_0}^t e^{[\frac{\tau}{\theta}-(\alpha N_1+\beta N_2)]s}ds\\
 &\quad =\begin{cases}
(t-t_0) e^{-\frac{\tau}{\theta}(t-t_0)}, &\text{ if  } \frac{\tau}{\theta}=(\alpha N_1+\beta N_2)\\[0.25cm]
\frac{1}{\frac{\tau}{\theta}-(\alpha N_1+\beta N_2)}\Bigr[e^{-(\alpha N_1+\beta N_2)(t-t_0)}-e^{-\frac{\tau}{\theta}(t-t_0)}\Bigr], & \text{ if }\frac{\tau}{\theta}\neq (\alpha N_1+\beta N_2)
\end{cases}\\[0.25cm]
 &\quad \leq  (t-t_0) e^{-(\alpha N_1+\beta N_2)(t-t_0)}\leq \frac{2}{(\alpha N_1+\beta N_2)e} e^{-\frac{(\alpha N_1+\beta N_2)}{2}(t-t_0)}, \quad \quad \forall t\geq t_0.
\end{split}
\ee
By the definition of $E_1$ in \eqref{E1-def} and the estimates \eqref{LSB-3}, \eqref{LSB-4}, we see that
\be\label{E1-geq-sth}
E_1(t)\geq \frac{2}{9}\min\Bigr\{\frac{1}{N_1}, \frac{a_1\mu_1}{a_2\mu_2N_2}\Bigr\}\Bigr[\int_\Omega (n_1-N_1)^2
     +
     \int_\Omega (n_2-N_2)^2\Bigr].
\ee
Joining \eqref{E1-geq-sth} and \eqref{LSB-7} and substituting the definitions of $\tau$,  $\theta$ and $m_c$   in \eqref{EF1} and \eqref{theta-mc},  we finally arrive at  \eqref{L2e} with $K_1$ and $\kappa$ given by \eqref{K1-def} and \eqref{kappa-def}, respectively.
\end{proof}

Thanks to the regularity in Lemma \ref{KS}, we employ the well-known  Gagliardo-Nirenberg interpolation inequality pass the $L^2$-convergence of $n_1$ and $n_2$ in \eqref{L2e} to the $L^\infty$-convergence.

\begin{lemma}\label{n1n2-conv-rate} Let $\Omega\subset \mathbb{R}^d$  be a bounded and smooth domain. Then the $n_1$- and $n_2$- solution components of bounded solution of \eqref{P} decay exponentially to $(N_1,N_2)$:
\begin{equation}\label{n1-n2-con-rate}
 \|n_1(\cdot, t) - N_1\|_{L^\infty}+ \|n_2(\cdot, t) - N_2\|_{L^\infty} \leq C e^{-\frac{\kappa}{d+2} (t-t_0)},  \quad \forall t\geq t_0
\end{equation}
for some $C>0$ independent of $t$. Here,  the exponential decay rate  $\kappa$ is defined by \eqref{kappa-def}.
\end{lemma}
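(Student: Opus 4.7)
The strategy is to interpolate the exponentially decaying $L^2$-estimate of Lemma \ref{L2-u1u2} against the uniform-in-time $W^{1,\infty}$-bound supplied by Lemma \ref{KS}. Set $v_i(\cdot,t):=n_i(\cdot,t)-N_i$ for $i=1,2$ and invoke the Gagliardo--Nirenberg interpolation inequality in the form
\begin{equation*}
\|w\|_{L^\infty(\Omega)}\;\leq\;C_{GN}\Bigl(\|\nabla w\|_{L^\infty(\Omega)}^{\frac{d}{d+2}}\,\|w\|_{L^2(\Omega)}^{\frac{2}{d+2}}+\|w\|_{L^2(\Omega)}\Bigr),
\end{equation*}
valid for any $w\in W^{1,\infty}(\Omega)$ on a smooth bounded domain. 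The interpolation exponent $a=d/(d+2)$ is dictated by scaling, since one needs $a\cdot 1+(1-a)\cdot(-d/2)=0$ to match the homogeneity of $\|\cdot\|_{L^\infty}$.

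Applying this inequality with $w=v_i$ and noting that $\nabla v_i=\nabla n_i$, Lemma \ref{KS} furnishes a constant $M>0$ with $\|\nabla v_i(\cdot,t)\|_{L^\infty}\leq M$ for all $t>1$, while Lemma \ref{L2-u1u2} yields $\|v_i(\cdot,t)\|_{L^2}\leq \sqrt{K_1}\,e^{-\frac{\kappa}{2}(t-t_0)}$ for $t\geq t_0$. Plugging these two bounds into the Gagliardo--Nirenberg inequality gives
\begin{equation*}
\|v_i(\cdot,t)\|_{L^\infty}\;\leq\;C_{GN}\Bigl(M^{\frac{d}{d+2}}K_1^{\frac{1}{d+2}}\,e^{-\frac{\kappa}{d+2}(t-t_0)}+\sqrt{K_1}\,e^{-\frac{\kappa}{2}(t-t_0)}\Bigr),\qquad t\geq t_0.
\end{equation*}
Because $d\geq 2$, we have $\frac{\kappa}{d+2}\leq\frac{\kappa}{4}<\frac{\kappa}{2}$, so the first term controls the decay; absorbing the second term into it and summing over $i=1,2$ delivers \eqref{n1-n2-con-rate} with a constant $C$ depending on $K_1$, $M$, $C_{GN}$ and $|\Omega|$ but independent of $t$.

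No real obstacle is anticipated. The only point deserving attention is that the version of Gagliardo--Nirenberg used involves $\|\nabla w\|_{L^\infty}$ rather than a higher Sobolev seminorm; this variant is standard on bounded smooth domains and is precisely the reason the authors bothered to upgrade the regularity to $W^{1,\infty}$ in Section 2. Once that ingredient is in place, passing from $L^2$- to $L^\infty$-decay is automatic and the convergence rate $\kappa/(d+2)$ appears directly as the interpolation exponent applied to the exponent $\kappa/2$ of the $L^2$-rate.
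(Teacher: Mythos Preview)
Your proposal is correct and follows essentially the same route as the paper: interpolate the $L^2$-decay from Lemma~\ref{L2-u1u2} against the uniform $W^{1,\infty}$-bound from Lemma~\ref{KS} via Gagliardo--Nirenberg, which produces exactly the exponent $\kappa/(d+2)$. The only cosmetic difference is that the paper writes the interpolation with $\|n_i\|_{W^{1,\infty}}$ in place of your $\|\nabla v_i\|_{L^\infty}$ and omits the additive lower-order term $\|w\|_{L^2}$, but since that term decays faster this is immaterial.
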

\begin{proof} Due  to the $L^2$-convergence of $n_1, n_2$ in \eqref{L2e} and the uniform $W^{1,\infty}$-boundedness of $n_1,n_2$ in \eqref{KS-1}, the Gagliardo-Nirenberg inequality  enables us to conclude that
\begin{align*}
&\ \ \|n_1(\cdot,t)-N_1\|_{L^\infty}+\|n_2(\cdot,t)-N_2\|_{L^\infty}\\
&\leq c_1\Bigr(\|n_1(\cdot,t)\|_{W^{1,\infty}}^\frac{d}{d+2}\|n_1(\cdot,t)-N_1\|_{L^2}^\frac{2}{d+2}
+\|n_2(\cdot,t)\|_{W^{1,\infty}}^\frac{d}{d+2}\|n_2(\cdot,t)-N_2\|_{L^2}^\frac{2}{d+2}\Bigr)\\
&\leq c_2\Bigr(\|n_1(\cdot,t)-N_1\|_{L^2}^\frac{2}{d+2}+\|n_2(\cdot,t)-N_2\|_{L^2}^\frac{2}{d+2}\Bigr)\leq c_3 e^{-\frac{\kappa}{d+2} (t-t_0)}, \quad \forall t\geq t_0.
\end{align*}
This is nothing but the exponential decaying estimate \eqref{n1-n2-con-rate}.
\end{proof}

\subsubsection{Convergence rate of $u$ in Case I} With the information gained from Lemmas \ref{L2-u1u2} and \ref{BS}, we are now able to derive  the convergence rate of $u$ in $L^\infty$-norm.  To accomplish this goal, we again begin with its  convergence rate in $L^2$-norm.
\begin{lemma} \label{Lue} The $u$-solution component of \eqref{P} fulfills
\begin{equation}\label{Lue-1}
\|u(\cdot,t)\|_{L^2}^2\leq \Bigr(\|u(\cdot,t_0)\|_{L^2}^2+\frac{2K_1\tilde{K}_1}{\kappa e}\Bigr)e^{- \min\{\lambda_P,\frac{\kappa}{2}\}(t-t_0)}, \quad \forall t\geq t_0,
\end{equation}
where $K_1, \tilde{K}_1$,  $\kappa$ and $\lambda_P$ are respectively defined by \eqref{K1-def}, \eqref{K1-tilde}, \eqref{kappa-def} and \eqref{poincare}.
\end{lemma}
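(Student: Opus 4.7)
The plan is to perform the standard $L^2$-energy estimate for the Navier-Stokes equation in \eqref{P}. Dotting the fourth equation with $u$ and integrating over $\Omega$, both the pressure term $\int_\Omega\nabla P\cdot u$ and, when $\kappa=1$, the convective term $\int_\Omega(u\cdot\nabla)u\cdot u$ vanish after integration by parts thanks to the incompressibility $\nabla\cdot u=0$ together with the no-slip condition $u|_{\partial\Omega}=0$, yielding
\begin{equation*}
\frac{1}{2}\frac{d}{dt}\int_\Omega|u|^2+\int_\Omega|\nabla u|^2=\int_\Omega(\gamma n_1+\delta n_2)\nabla\phi\cdot u.
\end{equation*}
To bring in the already-available exponential decay of $n_i-N_i$, I would split $n_i=(n_i-N_i)+N_i$ and observe that the constant piece contributes $(\gamma N_1+\delta N_2)\int_\Omega\nabla\phi\cdot u$, which vanishes after one more integration by parts using $\nabla\cdot u=0$ and $u|_{\partial\Omega}=0$.

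On what remains, I would apply H\"older's inequality followed by Young's inequality, combined with the Dirichlet Poincar\'e inequality $\lambda_P\|u\|_{L^2}^2\le\|\nabla u\|_{L^2}^2$ (valid because $u$ vanishes on $\partial\Omega$, whence the constant $\lambda_P$ in \eqref{poincare}). Absorbing half of $\|\nabla u\|_{L^2}^2$ into the dissipation side and using Poincar\'e on the remaining half yields a differential inequality
\begin{equation*}
\frac{d}{dt}\|u(\cdot,t)\|_{L^2}^2+\lambda_P\|u(\cdot,t)\|_{L^2}^2\le \tilde K_1\bigl(\|n_1(\cdot,t)-N_1\|_{L^2}^2+\|n_2(\cdot,t)-N_2\|_{L^2}^2\bigr),
\end{equation*}
where $\tilde K_1$ is the explicit constant defined in \eqref{K1-tilde}, depending on $\gamma,\delta,\|\nabla\phi\|_{L^\infty}$ and $\lambda_P$. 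Inserting the $L^2$-bound from Lemma \ref{L2-u1u2} majorizes the right-hand side by $\tilde K_1 K_1 e^{-\kappa(t-t_0)}$.

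Finally, variation of constants gives
\begin{equation*}
\|u(\cdot,t)\|_{L^2}^2\le \|u(\cdot,t_0)\|_{L^2}^2\,e^{-\lambda_P(t-t_0)}+\tilde K_1 K_1\int_{t_0}^t e^{-\lambda_P(t-s)-\kappa(s-t_0)}\,ds,
\end{equation*}
and the convolution integral is treated by the same dichotomy already carried out in \eqref{exp-comp}: it is majorized by $(t-t_0)\,e^{-\min\{\lambda_P,\kappa\}(t-t_0)}$ and then, via the elementary inequality $se^{-as}\le \tfrac{2}{ae}\,e^{-as/2}$, by $\tfrac{2}{\kappa e}\,e^{-\min\{\lambda_P,\kappa/2\}(t-t_0)}$. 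Substituting produces exactly \eqref{Lue-1}. The only genuinely delicate point is the opening step, namely recognizing that the pressure and convective terms cancel so that the resulting estimate is uniform in $\kappa\in\{0,1\}$, and that the constant buoyancy contribution $(\gamma N_1+\delta N_2)\int_\Omega\nabla\phi\cdot u$ drops out; once these cancellations are in hand, the remainder is a direct Gronwall argument modeled on the one used for \eqref{LSB-7} in the proof of Lemma \ref{L2-u1u2}.
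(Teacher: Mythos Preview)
Your proposal is correct and follows essentially the same route as the paper: test the $u$-equation against $u$, use $\nabla\cdot u=0$ and $u|_{\partial\Omega}=0$ to kill the pressure and convection terms and the constant buoyancy contribution $(\gamma N_1+\delta N_2)\int_\Omega\nabla\phi\cdot u$, then combine Young's inequality with the Poincar\'e inequality \eqref{poincare} to reach the linear ODI for $\|u\|_{L^2}^2$ with forcing $\tilde K_1 K_1 e^{-\kappa(t-t_0)}$, and finally integrate via Gronwall using the computation \eqref{exp-comp}. The only cosmetic difference is that the paper applies Poincar\'e to the full dissipation term first (obtaining $2\lambda_P\int_\Omega|u|^2$ on the left) and then spends $\lambda_P\int_\Omega|u|^2$ in Young's inequality, whereas you phrase it as absorbing half of $\int_\Omega|\nabla u|^2$; both yield the same inequality with the same constant $\tilde K_1$.
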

\begin{proof}Recalling that $\nabla \cdot u=0$ in $\Omega$ and $u|_{\partial \Omega}=0$, we  multiply the fourth equation in \eqref{P} by $u$ and integrate it over $\Omega$ to obtain
\begin{equation}\label{Lue-2}
\begin{split}
\frac{1}{2}\frac{d}{dt}\int_\Omega |u|^2+\int_\Omega |\nabla u|^2
&=\int_\Omega (\gamma n_1+\delta n_2)\nabla \phi \cdot u\\
&=\gamma\int_\Omega  (n_1-N_1)\nabla \phi \cdot u+\delta \int_\Omega (n_2-N_2)\nabla \phi \cdot u,
\end{split}
\end{equation}
where we also used the fact  $\int_\Omega \nabla \phi \cdot u =0$. Therefore, we apply the Poincar\'{e} inequality:
\begin{equation}\label{poincare}
\lambda_P\int_\Omega |u|^2\leq \int_\Omega |\nabla u|^2
\end{equation}
for the  Poincar\'{e}  constant $\lambda_P$  to \eqref{Lue-2} deduce that
\begin{equation*}
\begin{split}
&\frac{d}{dt}\int_\Omega |u|^2+2\lambda_P\int_\Omega |u|^2\\
&\leq 2\gamma\int_\Omega  |n_1-N_1| |\nabla \phi \cdot u|+2\delta \int_\Omega |n_2-N_2||\nabla \phi \cdot u|\\
&\leq \lambda_P\int_\Omega |u|^2 +\frac{2\gamma^2\|\nabla \phi\|_{L^\infty}^2}{\lambda_P}\int_\Omega |n_1-N_1|^2 +\frac{2\delta^2\|\nabla \phi\|_{L^\infty}^2}{\lambda_P}\int_\Omega |n_2-N_2|^2.
\end{split}
\end{equation*}
As a result, for
\be\label{K1-tilde}
\tilde{K}_1=2\max\{\frac{\gamma^2}{\lambda_P},\ \  \frac{\delta^2}{\lambda_P}\}\|\nabla \phi\|_{L^\infty}^2,
\ee
it follows that
\begin{equation}\label{Lue-3}
\frac{d}{dt}\int_\Omega |u|^2+\lambda_P\int_\Omega |u|^2\leq  \tilde{K}_1\Bigr(\int_\Omega |n_1-N_1|^2+\int_\Omega |n_2-N_2|^2\Bigr).
\end{equation}
Substituting \eqref{L2e} into \eqref{Lue-3}, we derive that
$$
\frac{d}{dt}\int_\Omega |u|^2+\lambda_P\int_\Omega |u|^2\leq K_1\tilde{K}_1 e^{-\kappa (t-t_0)}, \quad \forall t\geq t_0.
$$
Solving this  ODI and performing similar computations to \eqref{exp-comp},  we readily obtain
\begin{align*}
\|u(\cdot,t)\|_{L^2}^2&\leq  \|u(\cdot,t_0)\|_{L^2}^2e^{-\lambda_P(t-t_0)}+K_1\tilde{K}_1 e^{\kappa t_0}e^{-\lambda_Pt}\int_{t_0}^t e^{(\lambda_P-\kappa)s}ds\\
&\leq \Bigr(\|u(\cdot,t_0)\|_{L^2}^2+\frac{2K_1\tilde{K}_1}{\kappa e}\Bigr)e^{- \min\{\lambda_P,\frac{\kappa}{2}\}(t-t_0)}, \quad \forall t\geq t_0,
\end{align*}
which is precisely the desired exponential decay estimate  \eqref{Lue-1}.
\end{proof}

\begin{lemma}\label{uLinfty I}Let $\Omega\subset \mathbb{R}^d$  be a bounded and smooth domain. Then, for any $\epsilon\in(0,1)$,  there exists a constant $C>0$ such that
\begin{equation}\label{LI-1}
\|u(\cdot,t)\|_{L^\infty}\leq C e^{-\frac{\epsilon}{d+2}\min\{\lambda_P,\frac{\kappa}{2}\}(t-t_0)}, \quad \forall t\geq t_0,
\end{equation}
where the exponent  rate $\kappa$ is defined by \eqref{kappa-def}.
\end{lemma}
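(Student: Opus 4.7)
\medskip

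\noindent\textbf{Proof proposal.} The plan is to interpolate between the $L^2$-decay of $u$ established in Lemma \ref{Lue} and the uniform $W^{1,p}$-bound of $u$ provided by Lemma \ref{BS}, in the same spirit as Lemma \ref{n1n2-conv-rate} handled $n_1$ and $n_2$ via their $W^{1,\infty}$-regularity. Concretely, I would fix $p>d$ (to be chosen large at the end) and apply the Gagliardo--Nirenberg inequality in the form
\begin{equation*}
\|u(\cdot,t)\|_{L^\infty(\Omega)} \leq C_{\mathrm{GN}}\,\|u(\cdot,t)\|_{W^{1,p}(\Omega)}^{\theta}\,\|u(\cdot,t)\|_{L^2(\Omega)}^{1-\theta},
\end{equation*}
where the interpolation exponent $\theta\in(0,1)$ is determined by the scaling relation $0=\theta(\tfrac1p-\tfrac1d)+(1-\theta)\tfrac12$, giving $\theta=\tfrac{pd}{pd+2(p-d)}$ and $1-\theta=\tfrac{2(p-d)}{pd+2(p-d)}$.

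Next I would plug in the two estimates in hand: Lemma \ref{BS} yields a time-independent constant controlling $\|u(\cdot,t)\|_{W^{1,p}}$ for $t>1$, and Lemma \ref{Lue} gives
\begin{equation*}
\|u(\cdot,t)\|_{L^2(\Omega)} \leq C\, e^{-\tfrac12\min\{\lambda_P,\kappa/2\}(t-t_0)}, \qquad \forall\, t\geq t_0.
\end{equation*}
Inserting both into the Gagliardo--Nirenberg bound, I obtain
\begin{equation*}
\|u(\cdot,t)\|_{L^\infty(\Omega)} \leq C\, e^{-\tfrac{1-\theta}{2}\min\{\lambda_P,\kappa/2\}(t-t_0)} = C\, e^{-\tfrac{p-d}{pd+2(p-d)}\min\{\lambda_P,\kappa/2\}(t-t_0)},\quad \forall\, t\geq t_0.
\end{equation*}

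Finally, I would match the exponent to the one claimed in \eqref{LI-1}. Since $\tfrac{p-d}{pd+2(p-d)}\nearrow \tfrac{1}{d+2}$ as $p\to\infty$, for every prescribed $\epsilon\in(0,1)$ one can choose $p=p(\epsilon)>d$ large enough to guarantee $\tfrac{p-d}{pd+2(p-d)}\geq \tfrac{\epsilon}{d+2}$, which delivers exactly the decay rate stated in the lemma. I do not expect a serious obstacle here: the $W^{1,p}$-regularity (the analogue of the $W^{1,\infty}$-estimate used for $n_1,n_2$) is the only ingredient that cannot be pushed to $p=\infty$ for the Stokes problem, and this is precisely why the harmless loss $\epsilon$ must be introduced. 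The only mild subtlety worth double-checking is that the Gagliardo--Nirenberg inequality above is applicable to a vector field vanishing on $\partial\Omega$ in our dimensions $d\in\{2,3\}$, and that the constant $C$ in the final display may absorb $\|u(\cdot,t_0)\|_{L^2}$, $K_1\tilde K_1/(\kappa e)$, and the $W^{1,p}$-bound from Lemma \ref{BS}, all of which are independent of $t$.
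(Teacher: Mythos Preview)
Your proposal is correct and follows essentially the same route as the paper: interpolate via Gagliardo--Nirenberg between the $W^{1,p}$-bound of Lemma \ref{BS} and the $L^2$-decay of Lemma \ref{Lue}, obtaining the exponent $\tfrac{p-d}{dp+2p-2d}$, and then adjust $p$ to reach $\tfrac{\epsilon}{d+2}$. The only cosmetic difference is that the paper writes down the explicit choice $p=\tfrac{[d+2(1-\epsilon)]d}{(d+2)(1-\epsilon)}$ giving equality in the exponent, whereas you argue by sending $p\to\infty$; both are equivalent.
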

\begin{proof} Thanks to the $L^2$-convergence of $u$ in \eqref{Lue-1} and the uniform $W^{1,p}$-boundedness of $u$ in \eqref{Lu1e-1}, the Gagliardo-Nirenberg inequality allows us to infer
\begin{equation*}
\|u(\cdot,t)\|_{L^\infty}\leq c_1\|u(\cdot,t)\|_{W^{1,p}}^\frac{dp}{dp+2p-2d}\|u(\cdot,t)\|_{L^2}^\frac{2p-2d}{dp+2p-2d}\leq c_2e^{- \frac{(p-d)}{dp+2p-2d}\min\{\lambda_P,\frac{\kappa}{2}\}(t-t_0)}, \quad \forall t\geq t_0,
\end{equation*}
which implies \eqref{LI-1} upon choosing $p=[d+2(1-\epsilon)]d/[(d+2)(1-\epsilon)](>d)$.
\end{proof}

\subsection{Convergence rates in  Case II:  $a_1\geq 1>a_2$}  In this section, we will show that the solution components $(n_1,n_2,u)$ converge at least algebraically to $(0,1, 0)$.
\subsubsection{Convergence rates of $n_1$ and $n_2$ in Case II} Again, we start  with the $L^1$- and $L^2$-convergence rates of $n_1$ and $n_2$.
\begin{lemma}There exists $t_1\geq \max\{1, t_0\}$  such that
     \begin{equation}\label{C2-e}
     \|n_1(\cdot, t)\|_{L^1}+ \|n_2(\cdot, t) - 1\|_{L^2}^2 \leq  \frac{K_2}{t+t_1}, \quad \quad \forall  t\geq t_1,
\end{equation}
where
\be\label{K2-def}
K_2=\frac{\max\Bigr\{ 2t_1E_2(t_1), \quad \frac{k_1\Bigr(k_1+\sqrt{k_1^2+2a\sigma}\Bigr)}{\sigma}\Bigr\}}{\min\{1, \frac{2\mu_1}{9a_2\mu_2}\}}\geq O(1)(1+\frac{1}{\sigma})=O(1)(1+(1-a_2)^{-1})
\ee
with $\sigma$, $k_1,k_2$,  $k_3$ and $a$ defined in \eqref{C2*-1}, \eqref{k1-k2-def}, \eqref{E2-up-bd} and \eqref{a-def}, respectively.
\end{lemma}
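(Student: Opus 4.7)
The plan is to combine the dissipation $E_2' \le -\sigma F_2$ from Lemma \ref{C2*} with a complementary upper bound on $E_2$ in terms of $\sqrt{F_2}$, yielding a Riccati-type differential inequality whose integration produces $1/t$ decay. The underlying reason for algebraic (rather than exponential) convergence is structural: the summand $\int n_1$ in $E_2$ is \emph{linear} in $n_1$, whereas its dissipative counterpart $\int n_1^2$ in $F_2$ is quadratic, so via Cauchy--Schwarz the former is controlled only by the square root of the latter.

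\textbf{Step 1: two-sided comparison between $E_2$ and $F_2$.} Apply Taylor's formula to $\psi(z) = z - 1 - \log z$ at $z=1$. Since \eqref{LSB} gives $n_2 \in [\hat N_2/2, 3\hat N_2/2] = [1/2, 3/2]$ on $\Omega \times [t_0, \infty)$, we obtain $\tfrac{2}{9}(n_2-1)^2 \le n_2 - 1 - \log n_2 \le 2(n_2-1)^2$. Combining this with the Cauchy--Schwarz bound $\int n_1 \le |\Omega|^{1/2}(\int n_1^2)^{1/2}$, the exponential decay of $\int c^2$ from Lemma \ref{DC} (with rate $a = \beta/2$ since $(\hat N_1, \hat N_2) = (0,1)$ here), and the uniform boundedness of $n_2 - 1$ (which allows the quadratic Taylor piece $\int(n_2-1)^2$ to be absorbed into the $\sqrt{F_2}$ term), we arrive at an upper bound of the form
$$E_2(t) \le k_1 \sqrt{F_2(t)} + k_3 e^{-a(t-t_0)}, \qquad t \ge t_0,$$
for suitable constants $k_1, k_3 > 0$. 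In the opposite direction, the Taylor lower bound produces
$$E_2(t) \ge \min\Bigl\{1, \tfrac{2\mu_1}{9 a_2 \mu_2}\Bigr\}\bigl(\|n_1(\cdot,t)\|_{L^1} + \|n_2(\cdot,t)-1\|_{L^2}^2\bigr),$$
which is precisely what will transfer any decay of $E_2$ to the claimed estimate \eqref{C2-e}.

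\textbf{Step 2: Riccati-type inequality and comparison ansatz.} Inverting the upper bound gives $F_2 \ge k_1^{-2}\bigl(E_2 - k_3 e^{-a(t-t_0)}\bigr)_+^2$. Feeding this into $E_2' \le -\sigma F_2$ produces
$$E_2'(t) \le -\frac{\sigma}{k_1^2}\bigl(E_2(t) - k_3 e^{-a(t-t_0)}\bigr)_+^2.$$
Use the comparison ansatz $\phi(t) := A/(t+t_1)$. Choose $t_1 \ge \max\{1,t_0\}$ large enough that $E_2(t_1) \le A/(2 t_1)$ (forcing $A \ge 2 t_1 E_2(t_1)$) and that the quantity $(t+t_1) k_3 e^{-a(t-t_0)}$ is controllably small for $t \ge t_1$. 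At any hypothetical first crossing $\tau > t_1$ where $E_2(\tau) = \phi(\tau)$, necessarily $E_2'(\tau) \ge \phi'(\tau)$; substituting into the Riccati inequality and multiplying through by $(\tau+t_1)^2$ forces
$$\sigma\bigl(A - (\tau+t_1) k_3 e^{-a(\tau-t_0)}\bigr)^2 \le k_1^2 A.$$
Taking $\tau$ large enough reduces this to the quadratic $\sigma A^2 \le 2 k_1^2 A + 2 a k_1^2$, whose positive root is exactly $A_\ast = \tfrac{k_1\bigl(k_1 + \sqrt{k_1^2 + 2 a \sigma}\bigr)}{\sigma}$. Any $A > A_\ast$ therefore yields a contradiction, excluding the crossing and yielding $E_2(t) \le A/(t+t_1)$ on $[t_1, \infty)$.

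\textbf{Step 3: conclusion.} Combining the bound $E_2(t) \le A/(t+t_1)$ with the lower bound from Step 1, and choosing $A = \max\{2 t_1 E_2(t_1),\, A_\ast\}$, yields \eqref{C2-e} with the advertised $K_2$. The main hurdle is Step 2: the exponentially small shift $k_3 e^{-a(t-t_0)}$ must be handled without spoiling the polynomial rate, and it is precisely this shift that injects the $2a\sigma$ correction under the square root defining $A_\ast$, and ultimately produces the $(1-a_2)^{-1}$ dependence of $K_2$ through the dissipation constant $\sigma = (1-a_2)\mu_1 \min\{\tfrac{1}{2 a_2}, \tfrac{1}{1+a_2}\}$.
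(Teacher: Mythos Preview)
Your overall strategy --- bound $E_2$ above by $k_1\sqrt{F_2}$ plus an exponential tail, feed this into $E_2'\le-\sigma F_2$, and compare with $A/(t+t_1)$ --- is exactly the paper's. But Step~2 as written does not go through. The first-crossing time $\tau$ is determined by the solution; you cannot ``take $\tau$ large enough.'' What is needed is that the inequality $\sigma\bigl(A-(\tau+t_1)k_3e^{-a(\tau-t_0)}\bigr)^2\le k_1^2A$ fails for \emph{every} $\tau\ge t_1$, which requires a uniform bound on $(\tau+t_1)k_3e^{-a(\tau-t_0)}$, not a limit. Relatedly, the quadratic $\sigma A^2\le 2k_1^2A+2ak_1^2$ you write does not follow from your own shifted Riccati inequality, and your symbol ``$a$'' has silently changed meaning from the exponential rate $\beta/2$ in Step~1 to the constant of \eqref{a-def}.

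The paper sidesteps all of this by first \emph{squaring} the upper bound: from $E_2\le k_1\sqrt{F_2}+k_2e^{-\beta(t-t_0)}$ one gets $E_2^2\le 2k_1^2F_2+2k_2^2e^{-2\beta(t-t_0)}$ and hence the clean Riccati-with-forcing
\[
E_2'+\tfrac{\sigma}{2k_1^2}E_2^2\le \tfrac{k_2^2\sigma}{k_1^2}e^{-2\beta(t-t_0)}.
\]
One then verifies directly that $y(t)=b/(t+t_1)$ is a supersolution: $y'+\tfrac{\sigma}{2k_1^2}y^2-\tfrac{k_2^2\sigma}{k_1^2}e^{-2\beta(t-t_0)}=(t+t_1)^{-2}\bigl(\tfrac{\sigma}{2k_1^2}b^2-b-\tfrac{k_2^2\sigma}{k_1^2}(t+t_1)^2e^{-2\beta(t-t_0)}\bigr)\ge(t+t_1)^{-2}\bigl(\tfrac{\sigma}{2k_1^2}b^2-b-a\bigr)$, with $a$ \emph{defined} as in \eqref{a-def}. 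This is precisely where the quadratic $\tfrac{\sigma}{2k_1^2}b^2-b-a\ge0$ and the root $b=k_1(k_1+\sqrt{k_1^2+2a\sigma})/\sigma$ arise; ODE comparison (with $b\ge 2t_1E_2(t_1)$ to match initial data) then gives $E_2\le y$, and your lower bound in Step~1 finishes. Incidentally, for the upper bound on the log term the paper uses not the quadratic Taylor estimate but the linear one $|n_2-1-\log n_2|\le|n_2-1|$ for $n_2$ near $1$ (from $\lim_{z\to1}(z-1-\log z)/(z-1)=0$), which directly produces the stated $k_1$.
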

\begin{proof} Our proof  makes use of the explicit Lyapunov functional provided by Lemma \ref{C2*}.
To proceed,  we first apply the H\"{o}lder inequality to find
\begin{equation}\label{C2-1}
\int_\Omega n_1\leq |\Omega|^\frac{1}{2}\left(\int_\Omega n_1^2\right)^\frac{1}{2}.
\end{equation}
Next, since $\|n_2(\cdot,t)-1\|_{L^\infty}\to 0$ as $t\rightarrow \infty$ and
$$
\lim_{z\rightarrow 1}\frac{z-1-\log z}{z-1}=0,
$$
we can take $\hat{t}_0>0$ such that $|n_2(x,t)-1-\log n_2(x,t)|\leq |n_2(x,t)-1|$ for all $x\in\Omega$ and  $t\geq \hat{t}_0$. Accordingly, we have
\begin{equation}\label{C2-3}
\int_\Omega \left(n_2-1-\log n_2\right)\leq \int_\Omega
|n_2-1|\leq  |\Omega|^\frac{1}{2}\left(\int_\Omega (n_2-1)^2\right)^\frac{1}{2}, \quad \forall t\geq \hat{t}_0.
\end{equation}
In this case,  $(\hat{N}_1, \hat{N}_2)=(0,1)$, so the exponential decay of $c$ in Lemma \ref{DC} warrants that
\begin{equation}\label{C2-4}
\int_\Omega c^2\leq  |\Omega | \|c\|_{L^\infty}^2\leq |\Omega | \|c_0\|_{L^\infty}^2e^{-\beta (t-t_0)}, \quad \forall t\geq t_0.
\end{equation}
From the definitions  of $E_2$ and $F_2$ in Lemma \ref{C2*},  upon a combination of \eqref{C2-1}, \eqref{C2-3} and \eqref{C2-4} and the fact that $\sqrt{A}+\sqrt{B}\leq \sqrt{2(A+B)}$ for $A, B\geq 0$, we find two constants
\be\label{k1-k2-def}
k_1=\max\{1,\frac{\mu_1}{a_2\mu_2}\}(2|\Omega|)^\frac{1}{2},\quad \quad \quad  k_2=\frac{\mu_1\chi_2^2}{8a_2\mu_2}|\Omega | \|c_0\|_{L^\infty}^2
\ee
such that
\begin{equation*}
E_2(t)\leq k_1 F_2^\frac{1}{2}(t)+ k_2e^{-\beta(t-t_0)},\quad \forall  t\geq \max\{t_0,\hat{t}_0\},
\end{equation*}
which further gives us
$$
E_2^2(t)\leq 2k_1^2 F_2(t)+2k_2^2 e^{-2\beta (t-t_0)}, \quad \forall  t\geq \max\{t_0,\hat{t}_0\}.
$$
A substitution of this into the dissipation inequality  \eqref{C2*-1} entails
\begin{equation}\label{C2-5}
\frac{d}{dt}E_2(t)+\frac{\sigma}{2k_1^2} E^2_2(t)\leq \frac{k_2^2\sigma}{k_1^2} e^{-2\beta(t-t_0)}, \quad  t\geq \max\{t_0,\hat{t}_0\}.
\end{equation}
Now, to illustrate \eqref{C2-e}, we first take $t_1=\max\Bigr\{t_0,\hat{t}_0, 1, \beta^{-1}\Bigr\}$ so that
\be\label{a-def}
a=\frac{k_2^2\sigma}{k_1^2} \max\Bigr\{(t+t_1)^2e^{-2\beta(t-t_0)}:   t\geq t_0\Big\}=\frac{4k_2^2t_1^2\sigma}{k_1^2} e^{-2\beta(t_1-t_0)};
\ee
and  then, for any
\be\label{b-def}
b\geq \frac{k_1\Bigr(k_1+\sqrt{k_1^2+2a\sigma}\Bigr)}{\sigma},
\ee
we put
\be\label{upp-sol}
y(t)=\frac{b}{t+t_1},\quad \quad  t\geq 0.
\ee
We use  straightforward calculations from \eqref{upp-sol} and use \eqref{a-def} to see that
\begin{align*}
& y^\prime (t)+\frac{\sigma}{2k_1^2} y^2(t)-\frac{k_2^2\sigma}{k_1^2} e^{-2\beta(t-t_0)}\\
&=(t+t_1)^{-2}\Bigr[\frac{\sigma}{2k_1^2}b^2-b-\frac{k_2^2\sigma}{k_1^2} (t+t_1)^2e^{-2\beta(t-t_0)}\Bigr]\\
&\geq (t+t_1)^{-2}(\frac{\sigma}{2k_1^2}b^2-b-a)\geq 0, \quad \forall t\geq t_1.
\end{align*}
This, upon a clear choice of $b$ in \eqref{b-def},  an ODE comparison argument to \eqref{C2-5} shows
\begin{equation}\label{E2-up-bd}
E_2(t)\leq \frac{\max\Bigr\{ 2t_1E_2(t_1), \quad \frac{k_1\Bigr(k_1+\sqrt{k_1^2+2a\sigma}\Bigr)}{\sigma}\Bigr\}}{t+t_1}:=\frac{k_3}{t+t_1}, \quad \forall t\geq t_1.
\end{equation}
Then since $t_1\geq t_0$, we infer from  \eqref{LSB-4}  and the definition of $E_2(t)$  in Lemma \ref{C2*} that
\begin{equation*}
\min\{1, \frac{2\mu_1}{9a_2\mu_2}\}\Bigr(\|n_1(\cdot,t)\|_{L^1}+\|n_2(\cdot,t)-1\|^2_{L^2}\Bigr)\leq  \frac{k_3}{t+t_1}, \quad \quad \forall t\geq t_1.
\end{equation*}
This, upon a substitution of the respective definitions of  $k_1,k_2$,  $k_3$ and $a$ in \eqref{k1-k2-def}, \eqref{E2-up-bd} and \eqref{a-def},  proves our desired algebraic decay estimate  \eqref{C2-e}.
\end{proof}

\begin{lemma}\label{n1n2-conv-rate II} Let $\Omega\subset \mathbb{R}^d$  be a bounded and smooth domain. Then the $n_1$-and $n_2$-solution components of bounded solution of \eqref{P} decay at least algebraically to $(0,1)$: there exist two constants $K_3$ and $K_4$ independent of $t$ fulfilling
$$
K_3\geq O(1)\Bigr(1+(1-a_2)^{-\frac{1}{d+1}}\Bigr), \quad \quad K_4\geq O(1)\Bigr(1+(1-a_2)^{-\frac{1}{d+2}}\Bigr)
$$
such that
\be\label{n1-n2-con-rate IIn1}
 \|n_1(\cdot, t)\|_{L^\infty} \leq  \frac{K_3}{(t+t_1)^{\frac{1}{d+1}}}, \quad \forall  t\geq t_1
 \ee
 as well as
 \be \label{n1-n2-con-rate IIn2}
 \|n_2(\cdot, t) - 1\|_{L^\infty} \leq \frac{K_4}{(t+t_1)^{\frac{1}{d+2}}},  \quad \forall t\geq t_1.
\ee
\end{lemma}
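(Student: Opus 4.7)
The plan is to pass from the $L^1$- and $L^2$-decay estimates already gathered in \eqref{C2-e} to $L^\infty$-decay via the Gagliardo--Nirenberg interpolation inequality, the key external ingredient being the uniform $W^{1,\infty}$-bound on $n_1$ and $n_2$ supplied by Lemma \ref{KS}. This is structurally the same strategy carried out in Lemma \ref{n1n2-conv-rate} for Case (I); the only change is that the decay rates on the right-hand side are algebraic instead of exponential, and that the two components now use different source norms.

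For $n_1$, the plan is to exploit the stronger $L^1$-decay $\|n_1(\cdot,t)\|_{L^1}\le K_2/(t+t_1)$ in \eqref{C2-e} rather than the $L^2$-decay that is also encoded in $E_2$. Concretely, I would apply the Gagliardo--Nirenberg interpolation
\begin{equation*}
\|f\|_{L^\infty(\Omega)}\le c\,\|f\|_{W^{1,\infty}(\Omega)}^{d/(d+1)}\|f\|_{L^1(\Omega)}^{1/(d+1)}
\end{equation*}
to $f=n_1(\cdot,t)$; combined with the time-independent bound from \eqref{KS-1} and with \eqref{C2-e}, this yields
\begin{equation*}
\|n_1(\cdot,t)\|_{L^\infty}\le c_1\,K_2^{1/(d+1)}\,(t+t_1)^{-1/(d+1)},\qquad t\ge t_1,
\end{equation*}
which is \eqref{n1-n2-con-rate IIn1} with $K_3:=c_1K_2^{1/(d+1)}$. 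Since $K_2=O(1)(1+(1-a_2)^{-1})$ by \eqref{K2-def}, the asserted lower bound $K_3\ge O(1)(1+(1-a_2)^{-1/(d+1)})$ drops out automatically.

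For $n_2$, only the $L^2$-type information $\|n_2(\cdot,t)-1\|_{L^2}^2\le K_2/(t+t_1)$ is naturally available from \eqref{C2-e}. I would apply
\begin{equation*}
\|g\|_{L^\infty(\Omega)}\le c\,\|g\|_{W^{1,\infty}(\Omega)}^{d/(d+2)}\|g\|_{L^2(\Omega)}^{2/(d+2)}
\end{equation*}
to $g=n_2(\cdot,t)-1$; the uniform $W^{1,\infty}$-bound of $n_2$ from \eqref{KS-1} gives a uniform $W^{1,\infty}$-bound on $g$, and combining with \eqref{C2-e} yields
\begin{equation*}
\|n_2(\cdot,t)-1\|_{L^\infty}\le c_2\,K_2^{1/(d+2)}\,(t+t_1)^{-1/(d+2)},\qquad t\ge t_1,
\end{equation*}
so that $K_4:=c_2K_2^{1/(d+2)}\ge O(1)(1+(1-a_2)^{-1/(d+2)})$, as required.

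There is no real obstacle in this step; the only modeling choice is which integral norm to feed into Gagliardo--Nirenberg. I deliberately pick the $L^1$-norm for $n_1$ because $1/(d+1)>1/(d+2)$, so the $L^1$ route gives the sharper exponent; for $n_2$ the $L^2$-decay of $n_2-1$ is what the Lyapunov functional $E_2$ naturally supplies, forcing the exponent $1/(d+2)$. Both resulting $L^\infty$-rates, together with the tracked dependence of $K_2$ on $(1-a_2)$ via $\sigma$ in \eqref{C2*-1}, reproduce the scalings claimed for $K_3,K_4$.
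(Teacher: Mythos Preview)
Your proposal is correct and follows essentially the same approach as the paper: apply the Gagliardo--Nirenberg interpolation together with the uniform $W^{1,\infty}$-bounds of Lemma~\ref{KS}, using the $L^1$-decay of $n_1$ to obtain the exponent $1/(d+1)$ and the $L^2$-decay of $n_2-1$ to obtain the exponent $1/(d+2)$. Your tracking of the $(1-a_2)$-dependence via $K_2$ also matches the paper's reasoning.
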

\begin{proof}
Equipped with the uniform $W^{1,\infty}$-bounds of $n_1, n_2$ in Lemma  \ref{KS}, as before, by means of the Gagliardo-Nirenberg inequality, we readily infer, for all $t\geq t_1$,
\begin{align*}
 \|n_1(\cdot,t)\|_{L^\infty} \leq c_1\Bigr\|n_1(\cdot,t)\|_{W^{1,\infty}}^\frac{d}{d+1}\|n_1(\cdot,t)\|_{L^1}^\frac{1}{d+1}\leq c_2 \|n_1(\cdot,t)\|_{L^1}^\frac{1}{d+1}
\end{align*}
and
\begin{align*}
\|n_2(\cdot,t)-1\|_{L^\infty}\leq c_3\|n_2(\cdot,t)\|_{W^{1,\infty}}^\frac{d}{d+2}\|n_2(\cdot,t)-1\|_{L^2}^\frac{2}{d+2}\leq c_4\|n_2(\cdot,t)-1\|_{L^2}^\frac{2}{d+2}.
\end{align*}
These along with  the $L^1$-and $L^2$-convergence of $n_1, n_2$ in \eqref{C2-e} and the bound for $K_2$ in \eqref{K2-def} yield  immediately \eqref{n1-n2-con-rate IIn1} and  \eqref{n1-n2-con-rate IIn2}.
\end{proof}
\subsubsection{Convergence rate of $u$ in Case II}
\begin{lemma} \label{u-con case II} The $u$-solution component of \eqref{P} fulfills
\begin{equation}\label{Lue-1-II}
\|u(\cdot,t)\|_{L^2}^2\leq \frac{K_5}{t+t_1}, \ \ \quad \forall t\geq t_1
\end{equation}
for some positive constant $K_5$ independent of $t$ satisfying $
K_5\geq O(1)(1+(1-a_2)^{-1})$.
\end{lemma}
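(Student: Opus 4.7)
The plan is to mirror the argument of Lemma \ref{Lue} but, in place of the exponential decay of $n_1$ and $n_2-1$, to feed in the algebraic decay from \eqref{C2-e} and then pass from the resulting linear ordinary differential inequality to a pointwise algebraic bound by the same supersolution ansatz $b/(t+t_1)$ already used in the proof of \eqref{C2-e}.

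First I would test the fourth equation in \eqref{P} against $u$ and integrate over $\Omega$. The convective term vanishes thanks to $\nabla\cdot u=0$ and $u|_{\partial\Omega}=0$, the pressure term vanishes for the same reason, and since $\int_\Omega \nabla\phi\cdot u=0$ the buoyancy forcing can be re-centered at the Case-(II) equilibrium $(0,1)$, giving
\begin{align*}
\tfrac{1}{2}\tfrac{d}{dt}\|u\|_{L^2}^2+\|\nabla u\|_{L^2}^2
 =\gamma\int_\Omega n_1\,\nabla\phi\cdot u+\delta\int_\Omega (n_2-1)\,\nabla\phi\cdot u.
\end{align*}
Applying the Poincar\'e inequality \eqref{poincare} on the left and Young's inequality on the right, absorbing $\tfrac{\lambda_P}{2}\|u\|_{L^2}^2$, leads to
\begin{align*}
\tfrac{d}{dt}\|u\|_{L^2}^2+\lambda_P\|u\|_{L^2}^2\leq \tilde{K}_2\bigl(\|n_1\|_{L^2}^2+\|n_2-1\|_{L^2}^2\bigr),
\end{align*}
with $\tilde{K}_2=2\lambda_P^{-1}\max\{\gamma^2,\delta^2\}\|\nabla\phi\|_{L^\infty}^2$, in complete analogy to \eqref{K1-tilde}. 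Because $n_1$ is uniformly $L^\infty$-bounded by \eqref{bdd-sol}, the interpolation $\|n_1\|_{L^2}^2\leq \|n_1\|_{L^\infty}\|n_1\|_{L^1}$ together with \eqref{C2-e} converts this into an ODI of the form
\begin{align*}
\tfrac{d}{dt}\|u\|_{L^2}^2+\lambda_P\|u\|_{L^2}^2\leq \frac{M}{t+t_1},\qquad \forall t\geq t_1,
\end{align*}
where $M$ is a constant multiple of $K_2$, and hence inherits $M\geq O(1)\bigl(1+(1-a_2)^{-1}\bigr)$.

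It remains to convert this ODI into the pointwise bound \eqref{Lue-1-II}. Taking $y^\ast(t)=b/(t+t_1)$, a direct computation gives
\begin{align*}
(y^\ast)^{\prime}(t)+\lambda_P y^\ast(t)-\frac{M}{t+t_1}=\frac{1}{t+t_1}\Bigl(\lambda_P b-\frac{b}{t+t_1}-M\Bigr),
\end{align*}
which is nonnegative on $[t_1,\infty)$ once $t_1$ has been enlarged so that $(t+t_1)^{-1}\leq \lambda_P/2$ for $t\geq t_1$ and $b$ is chosen to dominate both $2M/\lambda_P$ and $t_1\|u(\cdot,t_1)\|_{L^2}^2$. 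A scalar ODE comparison argument then forces $\|u(\cdot,t)\|_{L^2}^2\leq b/(t+t_1)$ for all $t\geq t_1$, which is exactly \eqref{Lue-1-II} with $K_5:=b$.

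The only real nuisance, as in the supersolution step of the preceding lemma, is the bookkeeping of the dependence of $K_5$ on $(1-a_2)^{-1}$: the enlargement of $t_1$ and the final choice of $b$ must be controlled only by $\lambda_P$ and quantities independent of $a_2$, so that the linear scaling $K_5\propto M\propto K_2$ is preserved. Since $K_2$ already carries the factor $(1-a_2)^{-1}$ from \eqref{K2-def} and no other constant entering the construction depends on $a_2$, the stated quantitative bound $K_5\geq O(1)\bigl(1+(1-a_2)^{-1}\bigr)$ follows.
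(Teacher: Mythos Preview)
Your argument is correct and follows the same overall strategy as the paper: test the $u$-equation against $u$, apply Poincar\'e, feed in the algebraic decay \eqref{C2-e}, and convert the resulting linear ODI into the pointwise bound. Two technical choices differ. First, for the $n_1$-term the paper bypasses your Young-plus-interpolation step and estimates $\gamma\int_\Omega n_1\,\nabla\phi\cdot u\le \gamma\|\nabla\phi\|_{L^\infty}\|u\|_{L^\infty}\|n_1\|_{L^1}$ directly via the uniform $L^\infty$-bound on $u$, landing on the same ODI one step sooner. Second, and more interestingly, the paper does not use your supersolution ansatz $b/(t+t_1)$: it integrates the ODI by variation of constants,
\[
\|u(\cdot,t)\|_{L^2}^2\le \|u(\cdot,t_1)\|_{L^2}^2\,e^{-\lambda_P(t-t_1)}+K_2\tilde K_5\, e^{-\lambda_P t}\int_{t_1}^t \frac{e^{\lambda_P s}}{s+t_1}\,ds,
\]
and then shows the convolution term is $O((t+t_1)^{-1})$ by a one-line L'H\^opital computation. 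Your comparison argument is arguably more elementary and avoids this asymptotic step, at the price of needing $t_1$ large enough that $(2t_1)^{-1}\le \lambda_P/2$; since $t_1$ was introduced existentially in the preceding lemma and $\lambda_P$ is independent of $a_2$, this enlargement is harmless and leaves the $(1-a_2)^{-1}$ tracking intact.
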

\begin{proof} Thanks to the $L^1$-and $L^2$-convergence of $n_1, n_2$ in \eqref{C2-e}, we can easily adapt the proof of Lemma \ref{Lue} here. Indeed, we derive from \eqref{Lue-2},  the Poincar\'{e} inequality \eqref{poincare} and the boundedness of $u$ that
\be\label{dif-II}
\frac{1}{2}\frac{d}{dt}\int_\Omega |u|^2+\frac{\lambda_P}{2}\int_\Omega |u|^2\leq \gamma\|\nabla \phi\|_{L^\infty}\|u\|_{L^\infty}\int_\Omega n_1+ \frac{\delta^2\|\nabla \phi\|_{L^\infty}^2}{2\lambda_P}\int_\Omega |n_2-1|^2.
\ee
Thus, for
$$
\tilde{K}_5=2\max\{ \gamma\|u\|_{L^\infty(\Omega\times (0,\infty))},\ \  \frac{\delta^2\|\nabla \phi\|_{L^\infty}}{2\lambda_P}\}\|\nabla \phi\|_{L^\infty}<\infty,
$$
from \eqref{dif-II} and  \eqref{C2-e},  we obtain  an ODI as follows:
\begin{equation}\label{Lue-3II}
\frac{d}{dt}\int_\Omega |u|^2+\lambda_P\int_\Omega |u|^2\leq  \tilde{K}_5\Bigr( \int_\Omega n_1+\int_\Omega|n_2-1|^2\Bigr)\leq \frac{K_2\tilde{K}_5}{t+t_1}, \quad \forall t\geq t_1.
\end{equation}
Solving the  ODI \eqref{Lue-3II}, we end up with
\begin{align*}
\|u(\cdot,t)\|_{L^2}^2&\leq  \|u(\cdot,t_1)\|_{L^2}^2e^{-\lambda_P(t-t_1)}+K_2\tilde{K}_5 e^{-\lambda_Pt}\int_{t_1}^t \frac{e^{\lambda_P s}}{s+t_1}ds\\
&\leq \|u(\cdot,t_1)\|_{L^2}^2e^{-\lambda_P(t-t_1)}+\frac{K_2\tilde{K}_5\hat{K}_5}{t+t_1}\\
&\leq \Bigr(\|u(\cdot,t_1)\|_{L^2}^2e^{-\lambda_P t_1}\bar{K}_5+K_2\tilde{K}_5\hat{K}_5\Bigr)(t+t_1)^{-1}, \quad \forall t\geq t_1,
\end{align*}
from which \eqref{Lue-1-II} follows. Here,  we have used the following facts
$$
\bar{K}_5=\max\{(t+t_1)e^{-\lambda_Pt}:\ \  t\geq t_1\}<\infty
$$
and
$$
\hat{K}_5=\max\Bigr\{(t+t_1)e^{-\lambda_Pt}\int_{t_1}^t \frac{e^{\lambda_P s}}{s+t_1}ds: \ \ t\geq t_1\Bigr\}<\infty.
$$
The latter is due to
$$
\lim_{t\to \infty} \Bigr[(t+t_1)e^{-\lambda_Pt}\int_{t_1}^t \frac{e^{\lambda_P s}}{s+t_1}ds\Bigr]=\lim_{t\to \infty} \frac{\int_{t_1}^t \frac{e^{\lambda_P s}}{s+t_1}ds}{(t+t_1)^{-1}e^{\lambda_Pt}}=\frac{1}{\lambda_p}<\infty. $$
\end{proof}
With the $L^2$-convergence of $u$ in Lemma \ref{u-con case II} at hand,   the same argument as done to Lemma \ref{uLinfty I} shows the following $L^\infty$-convergence of $u$.
\begin{lemma}\label{uLinfty II}Let $\Omega\subset \mathbb{R}^d$  be a bounded and smooth domain. Then, for any $\epsilon\in(0,1)$,  there exists a positive constant
$$
K_6\geq O(1)\Bigr(1+(1-a_2)^{-\frac{\epsilon}{d+2}}\Bigr)
$$
 such that
\begin{equation}\label{LI-1}
\|u(\cdot,t)\|_{L^\infty}\leq  \frac{K_6}{(t+t_1)^\frac{\epsilon}{d+2}}, \quad \quad \forall t\geq t_1.
\end{equation}
\end{lemma}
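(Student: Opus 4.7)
The plan is to imitate, essentially verbatim, the argument already used in Lemma \ref{uLinfty I}, substituting the algebraic $L^{2}$-decay from Lemma \ref{u-con case II} in place of the exponential one. Concretely, I will interpolate between the uniform $W^{1,p}$-bound of $u$ from Lemma \ref{BS} (valid for any prescribed $p\in(d,\infty)$) and the time-decaying $L^{2}$-norm of $u$ via the Gagliardo--Nirenberg inequality. Since Lemma \ref{BS} provides a \emph{time-independent} $W^{1,p}$-bound, the full time decay is dictated by the $L^{2}$-decay weighted by the interpolation exponent.

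More precisely, for any $p\in(d,\infty)$, the Gagliardo--Nirenberg inequality gives a constant $c_{1}>0$ such that
\begin{equation*}
\|u(\cdot,t)\|_{L^{\infty}}\leq c_{1}\|u(\cdot,t)\|_{W^{1,p}}^{\frac{dp}{(d+2)p-2d}}\|u(\cdot,t)\|_{L^{2}}^{\frac{2(p-d)}{(d+2)p-2d}}, \qquad t\geq t_{1}.
\end{equation*}
Inserting the uniform bound $\|u(\cdot,t)\|_{W^{1,p}}\leq C$ from Lemma \ref{BS} and the decay $\|u(\cdot,t)\|_{L^{2}}^{2}\leq K_{5}/(t+t_{1})$ from Lemma \ref{u-con case II}, I will arrive at
\begin{equation*}
\|u(\cdot,t)\|_{L^{\infty}}\leq c_{2}\,K_{5}^{\frac{p-d}{(d+2)p-2d}}\,(t+t_{1})^{-\frac{p-d}{(d+2)p-2d}},\qquad t\geq t_{1},
\end{equation*}
with $c_{2}$ depending only on $c_{1}$ and the uniform $W^{1,p}$-bound.

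Given any $\epsilon\in(0,1)$, I will then pick the same $p$ used in Lemma \ref{uLinfty I}, namely $p=\frac{d[d+2(1-\epsilon)]}{(d+2)(1-\epsilon)}$, which satisfies $p>d$ precisely because $\epsilon>0$. A direct computation (already implicit in Lemma \ref{uLinfty I}) shows that with this choice the exponent becomes $\frac{p-d}{(d+2)p-2d}=\frac{\epsilon}{d+2}$, producing the claimed decay $(t+t_{1})^{-\epsilon/(d+2)}$. The constant out front is $K_{6}=c_{2}K_{5}^{\epsilon/(d+2)}$, and since Lemma \ref{u-con case II} gives $K_{5}\geq O(1)(1+(1-a_{2})^{-1})$, the elementary inequality $(A+B)^{r}\leq 2^{r}(A^{r}+B^{r})$ for $r\in(0,1)$ yields the asserted lower-order asymptotics $K_{6}\geq O(1)\bigl(1+(1-a_{2})^{-\epsilon/(d+2)}\bigr)$.

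No genuine obstacle appears: the only points requiring care are (i) verifying $p>d$ for the chosen $p$ so that both the Gagliardo--Nirenberg exponents and the $W^{1,p}$-bound of Lemma \ref{BS} are legitimate, and (ii) tracking the $(1-a_{2})$-dependence correctly through the fractional power of $K_{5}$. Both are routine, so the lemma follows along exactly the same lines as its exponential counterpart in Case I.
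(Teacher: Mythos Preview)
Your proposal is correct and follows exactly the approach of the paper, which simply states that ``the same argument as done to Lemma \ref{uLinfty I}'' applies with the algebraic $L^{2}$-decay of Lemma \ref{u-con case II} in place of the exponential one. The one minor slip is the direction of the elementary inequality you cite for tracking the $(1-a_{2})$-dependence: to get a \emph{lower} bound $K_{5}^{\epsilon/(d+2)}\geq O(1)\bigl(1+(1-a_{2})^{-\epsilon/(d+2)}\bigr)$ from $K_{5}\geq O(1)\bigl(1+(1-a_{2})^{-1}\bigr)$ you need $(A+B)^{r}\geq 2^{r-1}(A^{r}+B^{r})$ for $r\in(0,1)$ (a consequence of concavity of $z\mapsto z^{r}$), not the upper bound $(A+B)^{r}\leq 2^{r}(A^{r}+B^{r})$ you wrote.
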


\subsection{Convergence rates in  Case III:  $a_2\geq 1>a_1$}  In this case, we  shall show that the solution components $(n_1,n_2,u)$ converge at least algebraically to $(1,0, 0)$. Comparing Lemmas \ref{C2*} and \ref{C3*} and the $n_1$-and $n_2$-equations in \eqref{P}, we see that this subsection is fully parallel to Section 4.3, and so we simply write down their respective  final outcomes.

\begin{lemma}\label{n1n2-conv-rate III}Let $\Omega\subset \mathbb{R}^d$  be a bounded and smooth domain. Then the $n_1$- and $n_2$- solution components of bounded solution of \eqref{P} decay at least algebraically to $(1,0)$: there exist $t_2\geq\max\{1, t_0\}$ and positive constants $K_7$ and $K_8$ independent of $t$ fulfilling
$$
K_7\geq O(1)\Bigr(1+(1-a_1)^{-\frac{1}{d+2}}\Bigr), \quad \quad K_8\geq O(1)\Bigr(1+(1-a_1)^{-\frac{1}{d+1}}\Bigr)
$$
such that
$$
 \|n_1(\cdot, t)-1\|_{L^\infty} \leq  \frac{K_7}{(t+t_2)^{\frac{1}{d+2}}}, \quad \forall  t\geq t_2
$$
and
$$
 \|n_2(\cdot, t) \|_{L^\infty} \leq \frac{K_8}{(t+t_2)^{\frac{1}{d+1}}},  \quad \forall t\geq t_2.
$$
\end{lemma}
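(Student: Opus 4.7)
The plan is to mirror the Case II argument of Section 4.3 almost verbatim on the Lyapunov pair $(E_3, F_3)$ from Lemma \ref{C3*}, with the roles of $n_1$ and $n_2$ interchanged. First I would derive an algebraic decay of $E_3$ by an ODE comparison, read off $L^2$-decay of $n_1-1$ and $L^1$-decay of $n_2$ from it, and finally upgrade to $L^\infty$ via Gagliardo-Nirenberg combined with the $W^{1,\infty}$-regularity of Lemma \ref{KS}.

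The key preparation is to bound $E_3$ by $F_3^{1/2}$ plus an exponentially small remainder. By H\"older, $\int_\Omega n_2 \leq |\Omega|^{1/2}(\int_\Omega n_2^2)^{1/2}$; since $n_1(\cdot,t)\to 1$ uniformly and $(z-1-\log z)/(z-1)\to 0$ as $z\to 1$, there is some $\hat t_0$ with $|n_1-1-\log n_1|\leq |n_1-1|$ on $\Omega\times[\hat t_0,\infty)$, whence $\int_\Omega (n_1-1-\log n_1)\leq |\Omega|^{1/2}(\int_\Omega (n_1-1)^2)^{1/2}$. Since $(\hat N_1,\hat N_2)=(1,0)$ in this case, Lemma \ref{DC} yields $\int_\Omega c^2 \leq |\Omega|\|c_0\|_{L^\infty}^2 e^{-\alpha(t-t_0)}$. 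Combining via $\sqrt{A}+\sqrt{B}\leq \sqrt{2(A+B)}$ produces constants $k_1,k_2>0$ with $E_3(t)\leq k_1 F_3^{1/2}(t) + k_2 e^{-\alpha(t-t_0)}$, and squaring gives $E_3^2 \leq 2k_1^2 F_3 + 2k_2^2 e^{-2\alpha(t-t_0)}$.

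Substituting this into the dissipation inequality \eqref{decay-lyIII} of Lemma \ref{C3*} produces
$$\frac{d}{dt}E_3 + \frac{\rho}{2k_1^2}E_3^2 \leq \frac{k_2^2\rho}{k_1^2} e^{-2\alpha(t-t_0)}$$
on $[t_2,\infty)$ with $t_2 := \max\{t_0,\hat t_0, 1,\alpha^{-1}\}$. Exactly as in the derivation of \eqref{E2-up-bd}, comparison with an upper solution $y(t)=b/(t+t_2)$, where $b$ is chosen of order $\rho^{-1}\sim(1-a_1)^{-1}$, gives $E_3(t)\leq k_3/(t+t_2)$ for $t\geq t_2$. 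A Taylor expansion of $\psi(z)=z-\log z$ at $z=1$ analogous to \eqref{LSB-3}, valid since $n_1$ is localized near $1$ for $t\geq t_2$ by \eqref{LSB}, yields the reverse bound $E_3(t)\geq c_\ast(\|n_1(\cdot,t)-1\|_{L^2}^2 + \|n_2(\cdot,t)\|_{L^1})$ for $t\geq t_2$. Consequently both $\|n_1-1\|_{L^2}^2$ and $\|n_2\|_{L^1}$ decay like $(t+t_2)^{-1}$ with constants of order $O(1)(1+(1-a_1)^{-1})$.

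Finally, the Gagliardo-Nirenberg interpolations
$$\|n_1-1\|_{L^\infty}\leq C\|n_1\|_{W^{1,\infty}}^{d/(d+2)}\|n_1-1\|_{L^2}^{2/(d+2)}, \qquad \|n_2\|_{L^\infty}\leq C\|n_2\|_{W^{1,\infty}}^{d/(d+1)}\|n_2\|_{L^1}^{1/(d+1)},$$
combined with the uniform $W^{1,\infty}$-bounds from Lemma \ref{KS}, deliver the advertised rates $(t+t_2)^{-1/(d+2)}$ for $n_1-1$ and $(t+t_2)^{-1/(d+1)}$ for $n_2$, with $K_7$ and $K_8$ inheriting the respective $(1-a_1)^{-1/(d+2)}$ and $(1-a_1)^{-1/(d+1)}$ dependence from the $L^2$/$L^1$ step. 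The only bookkeeping that requires genuine care is tracking how $k_3$ blows up as $a_1\uparrow 1$ through $\rho^{-1}$, so that the stated asymptotic lower bounds on $K_7$ and $K_8$ are verified; everything else is structurally identical to the Case II proof and involves no new analytic obstacle.
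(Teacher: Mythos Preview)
Your proposal is correct and follows exactly the route the paper intends: since the paper simply states that Case III is ``fully parallel to Section 4.3'' and writes down the outcomes without proof, what you have sketched---applying Lemma~\ref{C3*} in place of Lemma~\ref{C2*}, swapping the roles of $n_1$ and $n_2$, using the exponential decay of $c$ with rate $\alpha/2$, and then interpolating via Lemma~\ref{KS}---is precisely the argument being invoked. The tracking of the $(1-a_1)^{-1}$ blow-up of $k_3$ through $\rho^{-1}$ is also handled correctly.
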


\begin{lemma}\label{uLinfty III}Let $\Omega\subset \mathbb{R}^d$  be a bounded and smooth domain. Then, for any $\epsilon\in(0,1)$,  there exists a positive constant $ C_\epsilon \geq O(1)(1+(1-a_1)^{-\frac{\epsilon}{d+2}})$ such that
\begin{equation}\label{LI-1}
\|u(\cdot,t)\|_{L^\infty}\leq  \frac{C_\epsilon}{(t+t_2)^\frac{\epsilon}{d+2}}, \quad \quad \forall t\geq t_2.
\end{equation}
\end{lemma}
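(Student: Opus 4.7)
The plan is to mirror exactly the two-step route used for Case~II in Lemmas~\ref{u-con case II} and \ref{uLinfty II}: first extract an algebraic $L^2$-decay of $u$ from the Lyapunov machinery of Lemma~\ref{C3*}, and then upgrade it to $L^\infty$-decay via a Gagliardo--Nirenberg interpolation against the time-uniform $W^{1,p}$-bound of Lemma~\ref{BS}. The symmetry between the $n_1$- and $n_2$-equations in \eqref{P} makes every step a word-for-word adaptation of Section~4.3, with the roles of $n_1$ and $n_2$ swapped and with $(1-a_2)$ replaced by $(1-a_1)$; hence the $K_6$ of Case~II becomes a $C_\epsilon$ of the claimed order $O(1)(1+(1-a_1)^{-\epsilon/(d+2)})$.

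For the first step, I would argue as in the proof of Lemma~\ref{u-con case II}. Testing the fourth equation in \eqref{P} against $u$, using $\nabla \cdot u = 0$, $u|_{\partial \Omega}=0$, so that $\int_\Omega \nabla\phi\cdot u=0$, and splitting
\begin{equation*}
\int_\Omega (\gamma n_1+\delta n_2)\nabla\phi\cdot u = \gamma\int_\Omega (n_1-1)\nabla\phi\cdot u + \delta\int_\Omega n_2\,\nabla\phi\cdot u,
\end{equation*}
I would estimate the $(n_1-1)$-term by Young's inequality against the Poincar\'e dissipation $\lambda_P\|u\|_{L^2}^2$, and bound the $n_2$-term by $\delta\|\nabla\phi\|_{L^\infty}\|u\|_{L^\infty(\Omega\times(0,\infty))}\int_\Omega n_2$ using the uniform $L^\infty$-bound of $u$ from \eqref{bdd-sol}. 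This yields an ODI
\begin{equation*}
\frac{d}{dt}\int_\Omega |u|^2 + \lambda_P\int_\Omega |u|^2 \leq \tilde{C}\Bigl(\int_\Omega (n_1-1)^2+\int_\Omega n_2\Bigr).
\end{equation*}
The right-hand side is controlled by an analogue of \eqref{C2-e}: repeating the argument that produced \eqref{E2-up-bd} but with $E_3, F_3$ of Lemma~\ref{C3*} in place of $E_2,F_2$ (using the quadratic comparison $\int n_2 \leq |\Omega|^{1/2}(\int n_2^2)^{1/2}$ and the Taylor-expansion bound $|n_1-1-\log n_1|\leq |n_1-1|$ valid on $[t_2,\infty)$ after enlarging $t_2$), one obtains $\int_\Omega (n_1-1)^2 + \int_\Omega n_2 \leq K'/(t+t_2)$ with $K' \geq O(1)(1+(1-a_1)^{-1})$ coming from the decay rate $\rho=(1-a_1)\mu_1\min\{1/2, a_1/(1+a_1)\}$ in \eqref{decay-lyIII}. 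Solving the ODI exactly as in the proof of Lemma~\ref{u-con case II} (matching the $e^{-\lambda_P t}$ kernel against the $1/(s+t_2)$ source and using $\int_{t_2}^t e^{\lambda_P s}/(s+t_2)\,ds \sim e^{\lambda_P t}/[\lambda_P(t+t_2)]$ at infinity) gives $\|u(\cdot,t)\|_{L^2}^2 \leq K_5''/(t+t_2)$ for all $t\geq t_2$, with $K_5''$ of order $1+(1-a_1)^{-1}$.

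For the second step, I would invoke the Gagliardo--Nirenberg inequality exactly as in the proof of Lemma~\ref{uLinfty II}: for $p>d$,
\begin{equation*}
\|u(\cdot,t)\|_{L^\infty} \leq c\,\|u(\cdot,t)\|_{W^{1,p}}^{\frac{dp}{dp+2p-2d}}\,\|u(\cdot,t)\|_{L^2}^{\frac{2p-2d}{dp+2p-2d}}.
\end{equation*}
The $W^{1,p}$-factor is bounded uniformly in $t$ by Lemma~\ref{BS}, so the decay reduces to that of the $L^2$-factor, producing a rate $(t+t_2)^{-(p-d)/(dp+2p-2d)}$. Choosing $p=[d+2(1-\epsilon)]d/[(d+2)(1-\epsilon)]>d$ tunes this exponent to exactly $\epsilon/(d+2)$, and the constant $C_\epsilon$ inherits the $(1-a_1)^{-\epsilon/(d+2)}$ scaling from $K_5''$ raised to the power $(2p-2d)/(2(dp+2p-2d))=\epsilon/(d+2)$ inside the interpolation. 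The only real bookkeeping is to verify that this final scaling indeed matches the stated lower bound on $C_\epsilon$; no new analytic obstacle arises, since all the delicate estimates on the Lyapunov functional and on the heat semigroup have already been handled in Sections~2 and~3 and carry over verbatim by symmetry.
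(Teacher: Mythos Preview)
Your proposal is correct and follows exactly the approach the paper intends: the paper states explicitly that Case~III is ``fully parallel to Section~4.3'' and simply writes down the outcomes, so your two-step argument (first the $L^2$-decay ODI via the Lyapunov functional $E_3$ of Lemma~\ref{C3*} in place of $E_2$, then the Gagliardo--Nirenberg interpolation against the $W^{1,p}$-bound with the same choice of $p$) is precisely the symmetric adaptation the authors have in mind, including the tracking of the $(1-a_1)$-dependence in the constants.
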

\begin{proof}[Proof of Theorem \ref{convergence thm}]
 Notice that $t_0,t_1, t_2\geq 1$; the respective decay estimates asserted in Theorem \ref{convergence thm} follow from  some lemmas in this section  with perhaps some large constants $m_i$. More specifically, the exponential decay estimate  \eqref{exp-convergence-n1-n2-u} follows from Lemmas \ref{n1n2-conv-rate} and \ref{uLinfty I}; the algebraical decay estimate \eqref{alg-convergence-n1-n2-u II} follows from Lemmas \ref{n1n2-conv-rate II} and \ref{uLinfty II}; the algebraical decay estimate \eqref{alg-convergence-n1-n2-u III} follows from Lemmas \ref{n1n2-conv-rate III} and \ref{uLinfty III}; and, finally, the exponential decay estimate \eqref{c-exp-rate} follows simply  from Lemma \ref{DC}.
\end{proof}

\textbf{Acknowledgments}   The research of H.Y. Jin was supported by  NSF of China (No. 11501218) and the Fundamental Research Funds for the Central Universities (No. 2017MS107), and the research of  T. Xiang  was  supported by the NSF of China (No. 11601516).

\end{document}